\newcommand{\R}{\mathbb{R}}
\newcommand{\N}{\mathbb{N}}
\newcommand{\h}{\mathcal{H}}
\newcommand{\tw}{\mathcal{T}_\omega}
\newcommand{\ms}{\mathcal{S}}
\newcommand{\abr}[1]{\langle #1\rangle}
\newcommand{\vv}{\mathbf{v}}
\newtheorem{theorem}{Theorem}[section]
\newtheorem{lemma}[theorem]{Lemma}
\newtheorem{remark}[theorem]{Remark}
\numberwithin{equation}{section}
\newenvironment{altproof}[1]
{\noindent
{\em Proof of {#1}}.}
{\nopagebreak\mbox{}\hfill $\Box$\par\addvspace{0.5cm}}
\newcommand{\ba}{\begin{aligned}}
\newcommand{\ea}{\end{aligned}}
\newcommand{\sumj}{\sum_{j=1}^n}
\newcommand{\sumk}{\sum_{k=1}^n}
\newcommand{\bfu}{\mathbf{u}}
\newcommand{\bfv}{\mathbf{v}}
\newcommand{\cP}{{\mathcal P}}
\newcommand{\cT}{{\mathcal T}}
\renewcommand{\dim}{{\rm dim}\,}
\newcommand{\al}{\alpha}
\newcommand{\be}{\beta}
\newcommand{\ga}{\gamma}
\newcommand{\eps}{\varepsilon}
\newcommand{\la}{\lambda}
\newcommand{\om}{\omega}
\newcommand{\De}{\Delta}
\newcommand{\La}{\Lambda}
\newcommand{\Om}{\Omega}
\newcommand{\pa}{\partial}
\title{Bifurcations for a Coupled Schr\"odinger System\\ with Multiple Components}
\author[1]{Thomas Bartsch}
\affil{\it\small Mathematisches Institut, University of Giessen, Arndtstr. 2, 35392 Giessen, Germany}
\author[2]{Rushun Tian}
\affil{\it\small Academy of Mathematics and System Science, Chinese Academy of Sciences, Beijing 100190, PR China}
\author[3]{Zhi-Qiang Wang}
\affil[3]{\it Department of Mathematics and Statistics, Utah State University, Logan, UT 84322, USA}
\date{}
\begin{document}

\maketitle
\pagestyle{myheadings}
\thispagestyle{empty}

\begin{abstract}
In this paper, we study local bifurcations of an indefinite elliptic system with multiple components:
\begin{equation*}
 \left\{\begin{array}{ll}
         -\De u_j + au_j = \mu_ju_j^3+\be\sum_{k\ne j}u_k^2u_j,\\
         u_j>0\ \ \hbox{in}\ \Om, u_j=0 \ \ \hbox{on}\ \partial\Om,\ j=1,\dots,n.
        \end{array}
 \right.
\end{equation*}
Here $\Om\subset\R^N$ is a smooth and bounded domain, $n\ge3$, $a<-\Lambda_1$ where $\Lambda_1$ is the principal eigenvalue of $(-\De, H_0^1(\Om))$; $\mu_j$ and $\be$ are real constants. Using the positive and non-degenerate solution of the scalar equation $-\De\om-\om=-\om^3$, $\om\in H_0^1(\Om)$, we construct a synchronized solution branch $\tw$. Then we find a sequence of local bifurcations with respect to $\tw$, and we find global bifurcation branches of partially synchronized solutions.
\end{abstract}

\section{Introduction}
In this paper, we study the bifurcations of solutions to the following elliptic system
\begin{equation}\label{equ:es-1}
 \left\{\begin{array}{ll}
         -\De u_j + a_ju_j = \mu_ju_j^3+\be\sum_{k\ne j}u_k^2u_j,\\
         u_j>0\ \ \hbox{in}\ \Om,\ u_j=0 \ \ \hbox{on}\ \partial\Om,\ j=1, \dots, n,
        \end{array}
 \right.
\end{equation}
where $\Om\subset\R^N$ is a smooth and bounded domain with $N\le3$. Let $\Lambda_1$ be the principal eigenvalue of $(-\De, H_0^1(\Om))$. We say \eqref{equ:es-1} is definite if $a_j>-\Lambda_1$ for all $j$ and indefinite if $a_j\le-\Lambda_1$ for at least one $j$, $1\le j\le n$. Without loss of generality, assume $\mu_1\le\mu_2\le\cdots\le\mu_n$. System \eqref{equ:es-1} is called a focusing system if $0<\mu_1\le\dots\le\mu_n$ and a defocusing system if $\mu_1\le\dots\le\mu_n<0$. For all the other possibilities of $\mu_j$, we call \eqref{equ:es-1} a mixed system.

System \eqref{equ:es-1} describes the standing wave solutions of coupled nonlinear Schr\"odinger systems, which have many applications in physics, see \cite{Esry-Greene-Burke-Bohn:1997, Mitchell-Chen-Shih-Segev:1996, Ruegg-Cavadini-etc:2003} for examples. Mathematically, extensive research has been done regarding, for instance, the existence and multiplicity of solutions to these systems. One can refer to \cite{Ambrosetti-Colorado:2006, Ambrosetti-Colorado:2007, Bartsch-Wang:2006, Bartsch-Wang-Wei:2007, Dancer-Wei-Weth:2010, Lin-Wei-CMP:2005, Lin-Wei-PDNP:2006, Liu-Wang:2008, Liu-Wang:2010, Maia-Montefusco-Pellacci:2006, Sirakov:2007, Tian-Wang:2011, Wei-Weth:2007} for various types of results using variational methods. A different approach based on bifurcation methods has been applied in \cite{Bartsch-Dancer-Wang:2010, Tian-Wang:2013-jan, Tian-Wang:2013-feb}. In \cite{Bartsch-Dancer-Wang:2010} the definite case of \eqref{equ:es-1} has been considered with $n=2$, $a_1=a_2$, and $0<\mu_1\le\mu_2$. There the authors first found a continuous branch of {\it synchronized} solutions in $(\be,u_1,u_2)\in\R\times H_0^1(\Om)\times H_0^1(\Om)$, that is with two linearly dependent components $u_j=\al_j\om$ being constant multiples of a single function $\om\in H_0^1(\Om)$ which is a solution to the scalar equation $-\De\om+\om=\om^3$. This solution branch exists for $\be\in(-\sqrt{\mu_1\mu_2}, \mu_1)\cup(\mu_2, \infty)$. Then the existence of infinitely many local bifurcations with respect to this branch has been obtained in \cite{Bartsch-Dancer-Wang:2010}. If $\Om$ is radially symmetric and $u_1, u_2$ are restricted to a radial function space, every local bifurcation gives rise to a global bifurcation branch in $\R\times H_0^1(\Om)\times H_0^1(\Om)$. In \cite{Tian-Wang:2013-jan, Tian-Wang:2013-feb}, the indefinite cases of \eqref{equ:es-1} were considered for $n=2$ and $\mu_1, \mu_2\in\R$. According to the values of $\mu_1$ and $\mu_2$, bifurcation diagrams were also obtained for $\be$ in certain intervals.

A natural generalization of the results in \cite{Bartsch-Dancer-Wang:2010, Tian-Wang:2013-jan, Tian-Wang:2013-feb} would be to extend them to a system that consists of more components, that is $n\ge3$. Note that the increase of the number of components brings new difficulties in analyzing the linearized system of \eqref{equ:es-1}, which is important in determining the bifurcation parameters and describing global bifurcations. When $n=2$, the linearized system can be reduced to one scalar equation that is related to a Sturm-Liouville type eigenvalue problem. Then the bifurcation parameters can be determined, and global bifurcations are obtained, since the kernel space of the linearized system generically only has dimension $1$. In the case $n\ge3$, these processes become more complicated and higher dimensional kernels appear due to the structure of the system.

Recently, the bifurcation of $n$-component systems has been investigated in \cite{Bartsch:2013} when $a_j\equiv a$ and $\mu_j>0$, that is in the focusing case. Similar to the two equation system, a synchronized solution branch exists (all components being synchronized), and a sequence of local bifurcations with respect to this branch was found. The structure of the system however forces the kernels of the linearization to be high-dimensional at the bifurcation points; more precisely, the dimensions are positive multiples of $n-1$, hence they can never be $1$ and are even if $n$ is odd. Using a hidden symmetry, the existence of global bifurcation branches was proved in \cite{Bartsch:2013}, consisting of solutions $(\be,u_1,\dots,u_n)\in\R\times\h$, $\h=[H_0^1(\Om)]^n$, where some but not all components are synchronized.

In this paper, we are interested in the bifurcation phenomena of solutions to \eqref{equ:es-1} when $n\ge3$ and with the additional symmetric requirement: $a_j\equiv a$ for $j=1,\dots,n$. Without of loss of generality we may assume $\La_1<1$ and take $a=-1$, thus we consider the system
\begin{equation}\label{equ:es-2}
 \left\{\begin{array}{ll}
         -\De u_j - u_j = \mu_ju_j^3+\be\sum_{k\ne j}u_k^2u_j,\\
         u_j>0\ \ \hbox{in}\ \Om,\ u_j=0 \ \ \hbox{on}\ \pa\Om,\ j=1, \dots, n.
        \end{array}
 \right.
\end{equation}
We also have some non-existence results for the general system \eqref{equ:es-1} complementing the main existence theorems.

In order to state our results we need some notation. We fix the parameters $\mu_1\le\dots\le\mu_n$. The scalar equation
\begin{equation}\label{equ:scalar}
 -\De\om-\om=-\om^3, \hspace{1cm} \om\in H_0^1(\Om).
\end{equation}
has a unique, non-degenerate solution $\om>0$, see \cite{Oruganti-Shi-Shivaji:2002} for details. A solution $(u_1,\dots,u_n)$ of \eqref{equ:es-2} is said to be synchronized if all components are positive multiples of $\om$, that is $u_j=\al_j\om$ with $\al_j>0$, all $j=1,\dots,n$. We consider the function
\begin{equation}\label{equ:function_g}
 g(\be) = 1+\be\sumj\frac{1}{\mu_j-\be},
\end{equation}
which is defined for $\be\in\R\setminus\{\mu_1,\dots,\mu_n\}$ and has the derivative
\begin{equation}\label{equ:g}
g'(\be)=\sumj\frac{\mu_j}{(\mu_j-\be)^2}.
\end{equation}
It has vertical asymptotes $\be=\mu_j$, $j=1,\dots,n$, and satisfies $\lim_{\be\to\pm\infty}g(\be)=1-n<0$. In the focusing case, $g$ satisfies $g'>0$ and $g(0)=1$. Consequently it has a unique zero $\bar\be$ in $(-\infty,0)$. In the defocusing case, $g$ satisfies $g'<0$ and $\lim_{\be\to\mu_n^+}g(\be)=\infty$, hence it has a unique zero $\bar\be$ in the interval $(\mu_n,\infty)$.
\begin{figure}[!ht]
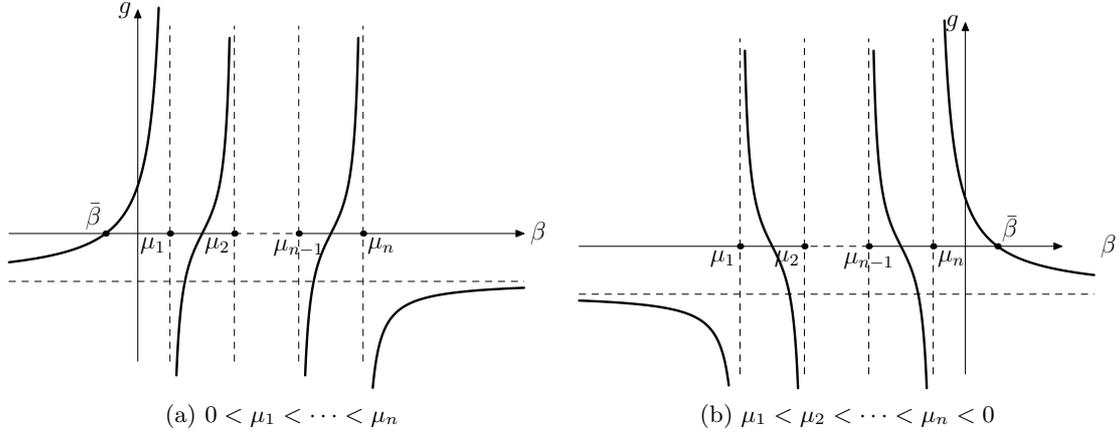

\centering
\subfloat[$0<\mu_1<\cdots<\mu_n$]{
\includegraphics[scale=0.8]{g_beta_focus.mps}
\label{fig:subfig-a}
}
\subfloat[$\mu_1<\mu_2<\cdots<\mu_n<0$]{
\includegraphics[scale=0.8]{g_beta_defocus.mps}
\label{fig:subfig-b}
}
\caption[Graphs of $g$]{Graphs of $g$ in the focusing case and defocusing case.}
\label{fig:bifurdiagram-ab}
\end{figure}

Now we can state our existence results.

\begin{theorem}\label{thm:synchr-branch}
 System \eqref{equ:es-2} has a synchronized solution branch
 \begin{equation}\label{equ:trivialbranch}
 \tw=\{(\be,u_1,\dots,u_n): u_j=\al_j(\be)\om,\ \be\in I\},
 \end{equation}
 which exists on the interval
 \begin{equation*}
  I =
  \left\{\begin{array}{ll}
          (-\infty, \bar{\be}) & \hbox{in the focusing case};\\
          (-\infty, \mu_1)\cup(\mu_n, \bar{\be}) & \hbox{in the defocusing case};\\
          (-\infty, \mu_1), & \hbox{in all the mixed cases}.
         \end{array}
  \right.
 \end{equation*}
 For $\be\in I$ the synchronized solution $\bfu(\be)=(u_1,\dots,u_n)\in[H_0^1(\Om)]^n$ is uniquely determined.
\end{theorem}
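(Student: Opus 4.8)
The plan is to substitute the synchronized ansatz $u_j=\al_j\om$ with $\al_j>0$ directly into \eqref{equ:es-2} and exploit the scalar equation \eqref{equ:scalar}. Since $\om>0$ solves $-\De\om-\om=-\om^3$, inserting $u_j=\al_j\om$ into the $j$-th equation and cancelling the common factor $\om^3>0$ turns it into the pointwise scalar identity
\[
-\al_j = \mu_j\al_j^3 + \be\,\al_j\sum_{k\ne j}\al_k^2 .
\]
Dividing by $\al_j>0$ and setting $t_j:=\al_j^2$, the entire PDE system collapses to the linear algebraic system $\mu_j t_j + \be\sum_{k\ne j}t_k = -1$ for $j=1,\dots,n$. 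The boundary condition and positivity are then automatic from the corresponding properties of $\om$, so a synchronized solution exists, and is unique, exactly when this linear system admits a solution with all $t_j>0$. The first task is therefore to solve it explicitly.

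Writing $S=\sumk t_k$, each equation reads $(\mu_j-\be)t_j+\be S=-1$, so the coefficient matrix is $\mathrm{diag}(\mu_j-\be)+\be\,\mathbf{1}\mathbf{1}^{T}$, a rank-one perturbation of a diagonal matrix. I would invert it by the Sherman–Morrison formula, or, more elementarily, solve for $t_j$ in terms of $S$ and then sum over $j$ to pin down $S$. Either route yields $S\,g(\be)=-\sum_j(\mu_j-\be)^{-1}$, and after simplifying with $g(\be)=1+\be\sum_j(\mu_j-\be)^{-1}$ one obtains the closed form
\[
t_j=\al_j^2=\frac{1}{g(\be)\,(\be-\mu_j)},\qquad j=1,\dots,n.
\]
This is valid, and the solution unique, whenever $\be\ne\mu_j$ for all $j$ and $g(\be)\ne0$, which already establishes the uniqueness claim; taking positive square roots recovers $\al_j(\be)$.

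The existence interval $I$ is then precisely the set of admissible $\be$ for which every $t_j>0$, i.e. $g(\be)(\be-\mu_j)>0$ for all $j$ at once. This forces either $g(\be)>0$ together with $\be>\mu_n$, or $g(\be)<0$ together with $\be<\mu_1$, while for $\mu_1<\be<\mu_n$ the factors $\be-\mu_j$ change sign and no solution can exist. It then remains to read off, case by case, where $g$ carries the required sign. In the focusing case $g'>0$ with $g(0)=1$, so on $(\mu_n,\infty)$ one has $g<0$ and the only sign-compatible region is $\be<\bar\be$, giving $I=(-\infty,\bar\be)$; in the defocusing case $g'<0$ with the stated asymptotics forces $g<0$ throughout $(-\infty,\mu_1)$ and $g>0$ on $(\mu_n,\bar\be)$, yielding $I=(-\infty,\mu_1)\cup(\mu_n,\bar\be)$.

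The main obstacle is the mixed case, where $g$ is not monotone and the asymptotics-plus-single-zero argument is unavailable. Here I would instead use the rewriting $g(\be)=(1-n)+\sum_j\frac{\mu_j}{\mu_j-\be}$ and bound each summand termwise. For $\be<\mu_1\le0$ one has $\frac{\mu_j}{\mu_j-\be}<1$ when $\mu_j>0$ and $\le0$ when $\mu_j\le0$; for $\be>\mu_n\ge0$ one has $\frac{\mu_j}{\mu_j-\be}\le0$ when $\mu_j\ge0$ and $\frac{\mu_j}{\mu_j-\be}\in(0,1)$ when $\mu_j<0$. Because a mixed configuration always has $\mu_1\le0$ and $\mu_n\ge0$, in both regimes the sum is strictly less than $n-1$, whence $g(\be)<0$ on both $(-\infty,\mu_1)$ and $(\mu_n,\infty)$. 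This eliminates the $g>0$ branch entirely and leaves exactly $I=(-\infty,\mu_1)$, completing all three cases.
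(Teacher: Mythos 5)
Your proof is correct, and its skeleton coincides with the paper's: the ansatz $u_j=\al_j\om$, cancellation of $\om^3$ via the scalar equation \eqref{equ:scalar}, the closed form $\al_j^2=\big[(\be-\mu_j)g(\be)\big]^{-1}$, and the positivity condition $(\be-\mu_j)g(\be)>0$ for all $j$. (The paper reaches the closed form by observing that $(\mu_j-\be)\al_j^2$ is independent of $j$; your Sherman--Morrison/summation route is the same computation, and your determinant criterion $\prod_k(\mu_k-\be)\,g(\be)\ne0$ gives the uniqueness claim just as the paper's explicit formula does.) The genuine divergence is in how the sign condition is resolved. You first record the dichotomy that positivity forces either $g(\be)<0$ with $\be<\mu_1$ or $g(\be)>0$ with $\be>\mu_n$, with nothing possible in $(\mu_1,\mu_n)$ --- a structural observation the paper never isolates, checking instead the sign of $(\be-\mu_j)g(\be)$ region by region. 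More substantially, in the mixed case the paper factors $(\be-\mu_j)g(\be)=c_j(\be)G(\be)$ with the polynomial $G(\be)=\prod_k(\mu_k-\be)+\be\sum_l\prod_{k\ne l}(\mu_k-\be)$, and determines the sign of $G$ on $(-\infty,\mu_1)$ and on $(\mu_n,\infty)$ from boundary values and the sign of $G'$, which forces a separate treatment of $n$ odd and $n$ even. You instead rewrite $g(\be)=1-n+\sumj\mu_j/(\mu_j-\be)$ and estimate termwise: in either exterior regime at least one summand (the one for $j=1$, resp.\ $j=n$) is $\le 0$ while the remaining $n-1$ are each strictly less than $1$, so $g<0$ on $(-\infty,\mu_1)\cup(\mu_n,\infty)$ and the $g>0$ branch is vacuous. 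This is shorter, avoids the parity split and all derivative computations, and as a side benefit it covers the mixed configurations in which some $\mu_j=0$; these belong to the mixed case as the paper defines it (neither focusing nor defocusing) but are formally outside the paper's computation, which is written for $\mu_1\le\dots\le\mu_k<0<\mu_{k+1}\le\dots\le\mu_n$. The paper's route yields in addition the monotonicity of $G$, but that information is not used elsewhere, so your argument is a clean simplification of the hardest case, while the focusing and defocusing cases are handled in both proofs the same way, by the monotonicity and limits of $g$ stated in the introduction.
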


\begin{remark} a) If $n=2$, the parameter interval $I$ given above is the same as for the indefinite 2-equation system, see \cite{Tian-Wang:2013-jan, Tian-Wang:2013-feb} for details.

b) There exist more synchronized solutions if $\mu_j \equiv \mu=\be$; see \cite[Proposition~2.1]{Bartsch:2013} in the focusing case. There are also more synchronized solutions when one allows some components to be negative multiples of $\om$.
\end{remark}

Next we state our result about bifurcation points on $\tw$. For this the function
\[
f(\be) = -1-\frac2{g(\be)}
\]
and the scalar eigenvalue problem
\begin{equation}\label{equ:scalareigenpro}
 -\De\psi-\psi=\la\om^2\psi \ \ \hbox{in}\ \Om, \ \ \psi=0\ \ \hbox{on}\ \partial\Om,
\end{equation}
play an important role. Recall that the eigenvalue problem \eqref{equ:scalareigenpro} has an infinite sequence of eigenvalues: $-1=\la_1<\la_2<\cdots<\la_{k_0} <0<\la_{k_0+1}<\cdots$ and $\la_k\to\infty$ as $k\to\infty$.

\begin{theorem}\label{thm:bif}
 If $\be_k$ is a solution of the equation $f(\be)=\la_k$ then $(\be_k,\bfu(\be_k))\in\tw$ is a bifurcation point of \eqref{equ:es-2}. In the focusing case of system \eqref{equ:es-2}, the equation $f(\be)=\la_k$ has a unique solution for all but finitely many $k\in\N$, hence there are infinitely many bifurcation points on $\tw$. In the defocusing or mixed cases of \eqref{equ:es-2}, the equation $f(\be)=\la_k$ has a solution for at most finitely many $k\in\N$. The solution is unique in the defocusing case, whereas in the mixed cases it may have finitely many solutions; hence there are at most finitely many bifurcation points in both cases.
\end{theorem}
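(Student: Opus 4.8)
The plan is to linearize system \eqref{equ:es-2} along the synchronized branch $\tw$ at a point $(\be,\bfu(\be))$, identify the kernel of the resulting linear operator, and then apply a Crandall--Rabinowitz type bifurcation criterion. First I would write $u_j = \al_j(\be)\om + v_j$ and expand the nonlinearity to first order in $\bfv = (v_1,\dots,v_n)$. Because each $u_j$ is a constant multiple of the same $\om$, the linearized operator $L(\be)$ acting on $\bfv$ will have the form $L(\be)\bfv = (-\De - 1)\bfv - \om^2 M(\be)\bfv$, where $M(\be)$ is an $n\times n$ matrix whose entries are built from the $\al_j(\be)$, the $\mu_j$, and $\be$ (the diagonal coming from differentiating $\mu_j u_j^3$ and the coupling $\be u_j \sum_{k\ne j}u_k^2$, the off-diagonal from $\be u_k^2 u_j$). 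The key algebraic step is to diagonalize $M(\be)$: I expect its spectrum to consist of a simple eigenvalue whose eigenvector is the ``fully synchronized'' direction $(\al_1,\dots,\al_n)$ together with a highly degenerate eigenvalue on the orthogonal complement. The function $f(\be)=-1-2/g(\be)$ should arise precisely as the reciprocal-type expression giving the degenerate eigenvalue of $M(\be)$, so that $\bfv = \psi\cdot w$ with $w$ in the degenerate eigenspace solves $L(\be)\bfv=0$ exactly when $-\De\psi-\psi = f(\be)\om^2\psi$, i.e.\ when $f(\be)=\la_k$ for some eigenvalue $\la_k$ of \eqref{equ:scalareigenpro}.

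Having reduced the kernel condition to $f(\be)=\la_k$, I would next verify the transversality (simple-eigenvalue crossing) hypothesis of the Crandall--Rabinowitz theorem, or equivalently an odd-crossing-number condition, to conclude that each such $\be_k$ is genuinely a bifurcation point rather than merely a degeneracy of the linearization. This requires checking that the relevant eigenvalue of $L(\be)$ crosses zero with nonzero speed as $\be$ passes through $\be_k$, which amounts to $f'(\be_k)\ne 0$ together with nondegeneracy of $\la_k$ as an eigenvalue of \eqref{equ:scalareigenpro}. Since the $\la_k$ are listed as strictly ordered and hence geometrically controlled, and since the crossing speed is governed by $g'(\be)$, this should follow from the sign information on $g'$ already recorded in the excerpt ($g'>0$ focusing, $g'<0$ defocusing).

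The counting statements then reduce to a careful analysis of the scalar function $f(\be)=-1-2/g(\be)$ on the interval $I$ from Theorem~\ref{thm:synchr-branch}, together with the list of eigenvalues $-1=\la_1<\la_2<\cdots$. The strategy is to study the range and monotonicity of $f$ on $I$ using the established behavior of $g$: its vertical asymptotes at $\be=\mu_j$, the limits $g(\be)\to 1-n$ as $\be\to\pm\infty$, the zero $\bar\be$, and the sign of $g'$. In the focusing case, where $g$ is increasing with a single zero in $(-\infty,0)$, I would show $f$ sweeps monotonically through an unbounded interval of values on $(-\infty,\bar\be)$, so that $f(\be)=\la_k$ has a unique solution for every sufficiently large $k$, giving infinitely many bifurcation points; only the finitely many $\la_k$ lying outside the range of $f$ (in particular those too negative, near $\la_1=-1$) may fail to be hit. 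In the defocusing and mixed cases, the shorter admissible interval $I$ confines $f$ to a bounded range, so only finitely many $\la_k$ can be attained; monotonicity of $g$ in the defocusing case forces uniqueness of the solution, while in the mixed cases $g$ need not be monotone and a bounded piece of $f$ can meet a horizontal line $\la_k$ in several points, accounting for the finitely-many-solutions clause.

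The main obstacle I anticipate is the algebraic diagonalization of $M(\be)$ and the precise identification of its degenerate eigenvalue with $f(\be)$: getting the matrix entries right from the coupled cubic nonlinearity, confirming that the synchronized direction splits off cleanly, and matching the resulting scalar eigenvalue equation to \eqref{equ:scalareigenpro} with exactly the factor $f(\be)=-1-2/g(\be)$. Once that identification is secured, the bifurcation conclusion and the delicate part of the counting follow from the well-understood analytic structure of $g$ and $f$ on the interval $I$; the remaining effort is the elementary but case-dependent bookkeeping of how many horizontal lines $\la=\la_k$ the graph of $f$ crosses over $I$ in each of the focusing, defocusing, and mixed regimes.
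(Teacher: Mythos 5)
Your first and third paragraphs match the paper: the linearization along $\tw$ does reduce to the matrix eigenvalue computation you describe (the paper's matrix $C(\be)$ has the simple eigenvalue $-3$ in the synchronized direction $(\ga_1,\dots,\ga_n)$ and the eigenvalue $f(\be)=-1-2/g(\be)$ with multiplicity $n-1$ on its orthogonal complement), and the counting of solutions of $f(\be)=\la_k$ in the three regimes is exactly the elementary analysis of the range and monotonicity of $f$ on $I$ that the paper carries out. However, your second paragraph --- the verification that each $\be_k$ is genuinely a bifurcation point --- contains a genuine gap. You propose the Crandall--Rabinowitz theorem ``or equivalently an odd-crossing-number condition.'' Neither applies here. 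Crandall--Rabinowitz requires a one-dimensional kernel, but the kernel of the linearization at $\be_k$ has dimension $(n-1)n_k\ge n-1\ge 2$ since $n\ge 3$ (each of the $n-1$ degenerate matrix directions contributes a copy of the eigenspace $V_k$ of \eqref{equ:scalareigenpro}). The degree-theoretic odd-crossing criterion fails as well: the number of eigenvalues crossing zero is $(n-1)n_k$, which is \emph{even} whenever $n$ is odd, and can be even in general; the paper points this out explicitly when discussing why global bifurcation cannot be claimed for the full system. So the tools you invoke simply do not reach the conclusion for $n\ge3$.

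What rescues the argument --- and this is the paper's key idea that your proposal misses --- is the variational structure: \eqref{equ:EllipticsystemRelaxed} is the Euler--Lagrange system of a $C^2$ functional $J_\be$, and for potential operators bifurcation occurs at any point where the Morse index along the trivial branch changes, regardless of the parity or size of the jump (the paper cites Theorem~8.9 of Mawhin--Willem). Concretely, the paper shows that on the kernel $V_{\be_k}^0$ one has $\abr{C'(\be_k)\bfv,\bfv}=f'(\be_k)|\bfv|^2$, so if $f'(\be_k)\ne0$ the derivative of the Hessian is definite on the kernel and the Morse index jumps by exactly $(n-1)n_k$ as $\be$ crosses $\be_k$; your transversality condition $f'(\be_k)\ne0$ is the right one, but it must be fed into this Morse-index argument rather than into Crandall--Rabinowitz. (Note also that no simplicity of $\la_k$ is needed or assumed.) Two smaller omissions: in the mixed cases $f'(\be_k)\ne0$ can actually fail, which is why the paper only gets bifurcation there under that proviso; and since Theorem~\ref{thm:bif} concerns the system with positivity constraints, one must still show the bifurcating solutions of the relaxed system are positive --- the paper does this via elliptic regularity ($H_0^1$-closeness upgrades to $C^1$-closeness) combined with $\om>0$ in $\Om$ and $\pa\om/\pa\nu<0$ on $\pa\Om$.
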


\begin{remark}
 The values of $\mu_j$ and the number of components determine the parameter interval of $\tw$ and affect the quantity of bifurcations along $\tw$. In particular, in the defocusing or mixed cases there may be no bifurcation points on $\tw$ depending on $n$ and $\Om$. In fact, fixing $\Om$ we shall see that there will be no solutions for $n$ large.
\end{remark}

For two-component systems, there is a global bifurcation branch emanating at every bifurcation point in the case $N=1$ or $\Om$ is radially symmetric, see \cite{Bartsch-Dancer-Wang:2010, Tian-Wang:2013-jan,Tian-Wang:2013-feb}. But for the multicomponent system, we do not have global bifurcation results in general, in particular for bifurcation solutions with all independent components. But, restricted to subspaces of $\R\times\h$ that possess the hidden symmetry as defined in \cite[Section 5]{Bartsch:2013}, global bifurcations may be found. Let $\cP=\{P_1, \cdots, P_m\}$ be a partition of $\{1, \dots, n\}$, $1\leq m\leq n$. If for any $1\leq j, k\leq n$ satisfying $j, k\in P_i$, $1\le i\le m$, the solution components $u_j$ and $u_k$ are synchronized, then the corresponding solution $\bfu$ is called a partially synchronized solution subject to partition $\cP$, or a $\cP$-synchronized solution for short.

About $\cP$-synchronized solutions we have the following theorem. Denote by $|\cP|$ the cardinality of $\cP$.

\begin{theorem}\label{thm:globalbifur}
 Let $\be_k$ be a solution of $f(\be)=\la_k$, so $(\be_k,\bfu(\be_k))\in\tw$ is a bifurcation point of \eqref{equ:es-2}.
 \begin{enumerate}[(i)]
  \item For every partition $\cP$ of $\{1, \dots, n\}$ with $|\cP| \ge 2$, $(\be_k,\bfu(\be_k))\in\tw$ is a bifurcation point of $\cP$-synchronized solutions of \eqref{equ:es-2}.
  \item Let $n_k$ denote the multiplicity of $\la_k$. If $(|\cP|-1)n_k$ is odd, then $(\be_k,\bfu(\be_k))\in\tw$ is a global bifurcation point of $\cP$-synchronized solutions.
  \item Suppose $n_k$ is odd. Let $A$ be a nonempty proper subset of $\{1,\dots,n\}$ and set $\cP_A=\{A,A^c\}$. Then there exists a global branch $\ms_k^A$ of $\cP_A$-synchronized solutions of \eqref{equ:es-2} bifurcating from $\tw$ at $(\be_k,\bfu(\be_k))$. Moreover, if $B$ is another nonempty subset of $\{1,\dots,n\}$, then the branches $\ms_k^A$ and $\ms_k^B$ are disjoint unless $A=B$ or $A=B^c$. In particular, there exist at least $2^{n_k-1}-1$ such global branches which are different.
  \item Let $A$ be a nonempty proper subset of $\{1,\dots,n\}$. If $N=1$ or $\Om$ is radial, then $\ms_k^A\cap\ms_l^A=\emptyset$ for $k\ne l$.
 \end{enumerate}
\end{theorem}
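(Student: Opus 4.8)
The plan is to write \eqref{equ:es-2} as $F(\be,\bfu)=0$ on $\R\times\h$, with $F(\be,\cdot)=\mathrm{Id}-K_\be$ a compact perturbation of the identity that is also a gradient, so that $\tw$ is its trivial branch. The first step is to compute the linearization $\cL_\be$ of the system along $\tw$. Writing $\bfu(\be)=(\al_1\om,\dots,\al_n\om)$ and using the relations defining $\tw$, namely $(\mu_j-\be)\al_j^2=-1/g(\be)$, one finds $\cL_\be\phi=(-\De-1)\phi-\om^2 M\phi$ with the constant symmetric matrix $M=2\be\,\vec\al\vec\al^{T}+f(\be)\,I$. Thus $M$ has the eigenvalue $-3$ on the synchronized direction $\vec\al$ and the eigenvalue $f(\be)$ of multiplicity $n-1$ on $\vec\al^{\perp}$. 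After diagonalizing $M$, the operator $\cL_\be$ splits into scalar problems $-\De\chi-\chi=d\,\om^2\chi$, which have a nontrivial solution exactly when the eigenvalue $d$ of $M$ equals some $\la_k$. Since $-3<\la_1=-1$ the synchronized direction never contributes, while pairing a vector $\vv\perp\vec\al$ with an eigenfunction of \eqref{equ:scalareigenpro} gives an element of $\ker\cL_{\be}$ precisely when $f(\be)=\la_k$. This recovers Theorem~\ref{thm:bif} and shows $\dim\ker\cL_{\be_k}=(n-1)n_k$.

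For (i) I would restrict to the set $\h_\cP$ of $\cP$-synchronized configurations. Imposing $u_j=\al_j v_i$ for $j\in P_i$ forces $(\mu_j-\be)\al_j^2$ to be constant on each block $P_i$ and turns \eqref{equ:es-2} into a coupled system for the $m=|\cP|$ profiles $v_1,\dots,v_m$, of which $\tw$ is the fully synchronized solution. Repeating the computation of the previous paragraph for this reduced system, the coupling matrix becomes $2\be\,\mathbf{1}\,\vec\sigma^{T}+f(\be)\,I_m$, where $\sigma_i=\sum_{k\in P_i}\al_k^2$; it again carries the eigenvalue $-3$ (on $\mathbf 1$) and the eigenvalue $f(\be)$, now of multiplicity $m-1$ (on $\vec\sigma^{\perp}$). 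Hence the kernel of the reduced linearization at $\be_k$ has dimension $(|\cP|-1)n_k>0$. Because $\cP$-synchronized solutions are exactly the critical points of the energy restricted to $\h_\cP$, and the Morse index of the corresponding Hessian along $\tw$ jumps by $(|\cP|-1)n_k$ as $f$ crosses $\la_k$ (which happens transversally since $g'\ne0$, hence $f'\ne0$), a variational bifurcation theorem yields bifurcation of $\cP$-synchronized solutions, proving (i).

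Parts (ii) and (iii) are degree-theoretic. As $\be$ crosses $\be_k$ the $(|\cP|-1)n_k$ kernel eigenvalues of the reduced linearization cross zero simultaneously and in the same direction, so the crossing number equals $(|\cP|-1)n_k$; when this is odd the Leray--Schauder degree of $F(\be,\cdot)$ on a small ball about $\bfu(\be_k)$ jumps, and the Rabinowitz global bifurcation theorem, applied to the $\cP$-reduced system, produces a continuum of $\cP$-synchronized solutions that is unbounded in $\R\times\h$ or returns to $\tw$ at a second bifurcation point; this is (ii). For (iii) I apply (ii) to the two-block partition $\cP_A=\{A,A^c\}$, for which $(|\cP_A|-1)n_k=n_k$ is odd by hypothesis, obtaining a global branch $\ms_k^A$. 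If a solution were simultaneously $\cP_A$- and $\cP_B$-synchronized with $B\ne A,A^c$, then the common refinement of the two partitions would be strictly finer and would force the solution to be fully synchronized, i.e.\ to lie on $\tw$; hence $\ms_k^A$ and $\ms_k^B$ can meet only inside $\tw$, which gives the stated disjointness. Counting the distinct two-block partitions $\{A,A^c\}$ of $\{1,\dots,n\}$, of which there are $2^{n-1}-1$, then yields the asserted family of pairwise distinct global branches.

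For (iv), restrict to the radial (or, if $N=1$, the full) function space, where \eqref{equ:scalareigenpro} has simple eigenvalues, so $n_k=1$ and $\psi_k$ has a number of nodal domains strictly increasing in $k$. Near $(\be_k,\bfu(\be_k))$ the kernel is one-dimensional and the bifurcating $\cP_A$-synchronized solutions satisfy $v_A-v_{A^c}\propto\psi_k$, so the difference of the two block profiles inherits the nodal count of $\psi_k$. A Rabinowitz-type Sturmian argument, as developed for two-component systems in the cited works and applied here to the reduced pair $(v_A,v_{A^c})$, shows this nodal count is preserved along $\ms_k^A$, since the difference cannot acquire a degenerate zero without the solution collapsing onto $\tw$. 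Consequently $\ms_k^A$ lies in the nodal class of $\psi_k$ and $\ms_l^A$ in that of $\psi_l$, which are disjoint for $k\ne l$. The main obstacle throughout is the high-dimensional and frequently even kernels: Crandall--Rabinowitz never applies, so (i) needs the variational reduction, and the global statements lie outside the reach of degree theory whenever $(|\cP|-1)n_k$ is even, which is exactly why (iii) forces an odd crossing number through the two-block reduction and why (iv) must supply a nodal invariant that persists beyond Leray--Schauder degree.
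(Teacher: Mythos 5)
Your proposal is correct and follows essentially the same route as the paper: the paper's proof of (i) is exactly the reduction you describe (it cites \cite[Lemmas 5.1, 5.2, 5.4 and Proposition 5.3]{Bartsch:2013} for passing between $\cP$-synchronized solutions of \eqref{equ:es-2} and solutions of the reduced $|\cP|$-component system and for the bifurcation of the reduced problem), (ii) is the odd-crossing global bifurcation argument you give (the paper names Crandall--Rabinowitz, but the tool actually needed and that you invoke is Rabinowitz's global theorem via the odd jump $(|\cP|-1)n_k$ of the Morse index), (iii) uses the identical set-theoretic observation that a solution that is both $\cP_A$- and $\cP_B$-synchronized with $B\ne A,A^c$ must be fully synchronized, and (iv) defers, as you do, to the nodal-type arguments for two-component systems in \cite{Tian-Wang:2013-jan, Tian-Wang:2013-feb}. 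Two remarks for accuracy: first, your transversality step ``$g'\ne0$, hence $f'\ne0$'' is automatic only in the focusing and defocusing cases; by Remark~\ref{remark:fprimeThreecases} it can fail in mixed cases, an implicit assumption your argument shares with the paper's own proofs of Theorems~\ref{thm:bif} and \ref{thm:globalbifur}. Second, your count of $2^{n-1}-1$ distinct branches (pairs $\{A,A^c\}$ with $A$ a nonempty proper subset) is what the construction actually yields, and it is also all the paper's proof yields; the exponent $n_k$ in the statement's ``$2^{n_k-1}-1$'' appears to be a typo for $n$, so your version is the supportable one.
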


We also have some nonexistence results for solutions of the general system \eqref{equ:es-1}.

\begin{theorem}\label{thm:nonexistence}
 System \eqref{equ:es-1} does not have positive solutions in the following cases:
 \begin{enumerate}[(i)]
  \item if $a_j\le-\La_1$, $\mu_j>0$ for some $j=1,\dots,n$ and $\be\ge0$;
  \item if $a_j\le a_i$, $\mu_i\le\be\le\mu_j$ for some $i<j$ and at least one inequality holds strictly;
  \item in the focusing case, if $a_j\le-\La_1$ for all $j=1,\dots,n$, $\be\ge\bar{\be}$ and at least one inequality holds strictly;
  \item in the mixed cases, if $a_n\le a_1\le-\La_1$, $\be\ge\mu_1$ and at least one inequality holds strictly.
 \end{enumerate}
\end{theorem}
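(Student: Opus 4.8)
The plan is to derive nonexistence in each case from two integral identities obtained by testing the equations of \eqref{equ:es-1} against suitable functions. Let $\vphi_1>0$ denote the principal eigenfunction, $-\De\vphi_1=\La_1\vphi_1$ in $\Om$ and $\vphi_1=0$ on $\pa\Om$. The first tool (the weighted inequality) is available whenever $a_j\le-\La_1$ for a given $j$: writing the $j$-th equation as $-\De u_j=(h_j-a_j)u_j$ with $h_j=\mu_ju_j^2+\be\sum_{k\ne j}u_k^2$, a positive solution $u_j\in H_0^1(\Om)$ forces $0$ to be the principal eigenvalue of $-\De-(h_j-a_j)$; testing the associated Rayleigh quotient with $\vphi_1$ (equivalently, via Picone's identity) gives $(\La_1+a_j)\int_\Om\vphi_1^2\ge\int_\Om h_j\vphi_1^2$, hence $\mu_jq_j+\be\sum_{k\ne j}q_k\le0$, where $q_k=\int_\Om u_k^2\vphi_1^2>0$ and the inequality is strict if $a_j<-\La_1$. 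The second tool (the cross identity) comes from testing the $i$-th equation with $u_j$ and the $j$-th with $u_i$ and subtracting; by symmetry of $\int_\Om\nabla u_i\cdot\nabla u_j$ this yields, for $i\ne j$, the identity $(a_i-a_j)\int_\Om u_iu_j=(\mu_i-\be)\int_\Om u_i^3u_j-(\mu_j-\be)\int_\Om u_j^3u_i$.

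Cases (i) and (ii) follow quickly. For (i), apply the weighted inequality to the single index $j$ with $a_j\le-\La_1$: since $\mu_j>0$ and $q_j>0$ the term $\mu_jq_j$ is positive, while $\be\ge0$ forces $\be\sum_{k\ne j}q_k\ge0$, contradicting $\mu_jq_j+\be\sum_{k\ne j}q_k\le0$. For (ii), insert the hypotheses $a_j\le a_i$ and $\mu_i\le\be\le\mu_j$ into the cross identity: the left side is $\ge0$, whereas the right side is $(\mu_i-\be)\int_\Om u_i^3u_j-(\mu_j-\be)\int_\Om u_j^3u_i\le0$; since at least one of the three inequalities is strict, one side is strictly signed, a contradiction.

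The heart of the argument is case (iii). Here $a_j\le-\La_1$ holds for every $j$, so the weighted inequality is available for all $j$ and reads $(\mu_j-\be)q_j\le-\be Q$ with $Q=\sumk q_k$. For $\be<0$ we have $\mu_j-\be>0$ in the focusing case, so $q_j\le-\be Q/(\mu_j-\be)$; summing over $j$ and dividing by $Q>0$ gives $1\le-\be\sumj\frac1{\mu_j-\be}$, that is $g(\be)\le0$. But in the focusing case $g$ is increasing with $g(\bar\be)=0$, so $g(\be)>0$ for $\bar\be<\be<0$, a contradiction; at the endpoint $\be=\bar\be$ the hypothesis that some $a_j<-\La_1$ makes one weighted inequality strict, turning the summed inequality into $g(\bar\be)<0$, again a contradiction. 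The remaining range $\be\ge0$ is covered by the argument of case (i). The key observation is that the $\vphi_1^2$-weighting is exactly what makes the weights telescope into $\sumj\frac1{\mu_j-\be}$ and reproduces the function $g$.

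For case (iv) I would combine both tools. Applying the cross identity with $i=1$, $j=n$ and using $a_n\le a_1$ together with $\be\ge\mu_1$ shows that existence of a positive solution forces $(\mu_n-\be)\int_\Om u_n^3u_1\le0$, hence $\be\ge\mu_n$; since the mixed case gives $\mu_n\ge0$ this yields $\be\ge0$, and an analysis of the equality case (using $\mu_1<\mu_n$, valid in the genuinely mixed situation) upgrades this to $\be>0$. The weighted inequality for $j=n$, namely $\mu_nq_n+\be\sum_{k\ne n}q_k\le0$, then has a nonnegative first term and a strictly positive second term, a contradiction. The main obstacle is precisely this case: the sign bookkeeping near $\be=\mu_n$ is delicate, and the degenerate configuration in which all $\mu_j=0$ must be treated separately — there the weighted inequality forces $\be=0$, the system reduces to $-\De u_j=-a_ju_j$, and a strict inequality among the $a_j$ rules out a positive solution since only $a_j=-\La_1$ admits one. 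This degenerate endpoint is exactly where the hypothesis that at least one inequality holds strictly becomes indispensable.
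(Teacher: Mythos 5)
Your proof is correct, but it takes a genuinely different route from the paper's, and in the central case (iii) a distinctly cleaner one. The paper's tests are linear in the solution: for (i) it pairs the $j$-th equation with $\vphi_1$ to get $0\ge(\La_1+a_j)\int_\Om u_j\vphi_1=\int_\Om\bigl(\mu_ju_j^3+\be\sum_{k\ne j}u_k^2u_j\bigr)\vphi_1>0$, and for (iii) it pairs the $k$-th equation with $(\mu_k-\be)^{-1/2}\vphi_1$, sums over $k$, and then needs a page-long pointwise algebraic manipulation (regrouping plus difference-of-squares factorizations) to show that the resulting asymmetric nonlinear form, built from integrals $\int_\Om u_k^2u_j\vphi_1$, dominates $g(\be)$ times a positive quantity. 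Your principal-eigenvalue/Picone comparison sidesteps all of that: since a positive solution forces $0$ to be the principal eigenvalue of $-\De-(h_j-a_j)$, each index $j$ with $a_j\le-\La_1$ yields the scalar inequality $\mu_jq_j+\be\sum_{k\ne j}q_k\le(\La_1+a_j)\int_\Om\vphi_1^2\le0$ in the quantities $q_k=\int_\Om u_k^2\vphi_1^2$, which are \emph{symmetric} (independent of which equation produced them); the $n$ inequalities can then be combined linearly, and in (iii) dividing by $\mu_j-\be>0$ and summing gives $g(\be)\le0$ in three lines, with the endpoint $\be=\bar\be$ handled by strictness exactly as you say. Case (ii) is identical in both proofs (the same cross identity). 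In (iv) the paper merely observes that $\mu_1\le\be<\mu_n$ falls under (ii) and $\be\ge\mu_n>0$ under (i), which implicitly assumes $\mu_n>0$; your combination of the cross identity (to force $\be>\mu_n\ge0$) with the weighted inequality also covers the degenerate mixed configurations $\mu_n=0$ and $\mu_j\equiv0$ that the paper's two-line reduction technically misses, a genuine if minor gain in completeness. The price of your route is that Tool 1 rests on standard but nontrivial facts --- the characterization of the principal eigenvalue by positivity of the eigenfunction for $-\De+V$, or equivalently the admissibility of $\vphi_1^2/u_j$ as a test function via $C^1$ regularity and Hopf's lemma --- whereas the paper's computations require nothing beyond integration by parts.
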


To close this section, we illustrate the local bifurcation results and the nonexistence results for the symmetric system with a few figures. In Figure~\ref{pic:bifurcationDiagrams}, the solid dots on $\tw$ are local bifurcation points and the shaded regions correspond to nonexistence intervals of $\beta$ for positive solutions of \eqref{equ:es-2}. The horizontal line $\cT_i$ represents the semi-trivial solution branch with only the $i$-th component being nontrivial. There are also semi-trivial solution branches with more nontrivial components. For example, all global bifurcation branches found in \cite{Tian-Wang:2013-jan, Tian-Wang:2013-feb} are semi-trivial solution branches with $2$ nontrivial components of \eqref{equ:es-2}. We omit them in Figure~\ref{pic:bifurcationDiagrams} to keep the diagrams clean.

\begin{figure}[!ht]
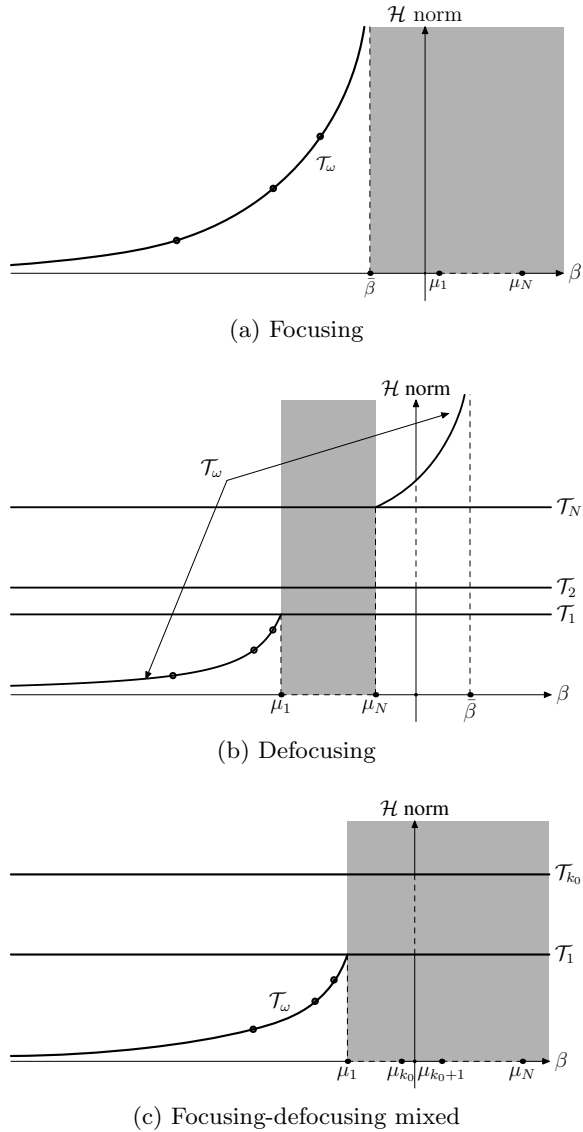

 \centering
 \subfloat[Focusing]{\includegraphics[width=0.45\textwidth]{local_BF_focus_mul.mps}}\\
 \subfloat[Defocusing]{\includegraphics[width=0.45\textwidth]{local_BF_defocus_mul.mps}}\\
 \subfloat[Focusing-defocusing mixed]{\includegraphics[width=0.45\textwidth]{local_BF_mix_mul.mps}}
 \caption{Nonexistence of positive solutions and local bifurcations of \eqref{equ:es-2}.}
 \label{pic:bifurcationDiagrams}
\end{figure}

\section{The synchronized solution branch}
In this section, we prove Theorem~\ref{thm:synchr-branch}. We make the ansatz
\begin{equation}\label{equ:u_j}
 u_j=\al_j\om,
\end{equation}
where the $\al_j$'s are positive constants. Substituting this into \eqref{equ:es-2}, we obtain the following system of equations for the coefficients $\al_j$:
\begin{equation*}
 \al_j(-\De\om-\om)
  = \al_j\left(\mu_j\al_j^2+\be\sum_{k\ne j}\al_k^2\right)\om^3.
\end{equation*}
Comparing with the scalar equation \eqref{equ:scalar}, we deduce
$\mu_j\al_j^2+\be\sum_{k\ne j}\al_k^2=-1$, which implies
\begin{equation*}
 (\mu_j-\be)\al_j^2=-1-\be\sum_{k=1}^n\al_k^2.
\end{equation*}
Note that the right-hand side of the above equation does not change in $j$, therefore
$$
(\mu_j-\be)\al_j^2=(\mu_k-\be)\al_k^2,\ \ \ \hbox{for } j,k=1,\dots, n.
$$
Substituting this and \eqref{equ:u_j} in the right-hand side of \eqref{equ:es-2} and combining like terms, we have
\begin{align*}
 -1 &= \mu_j\al_j^2+\be\sum_{k\ne j}\al_k^2
     = (\mu_j-\be)\al_j^2+\be\sum_{k=1}^n\al_k^2
     = (\mu_j-\be)\al_j^2+\be\sumk\frac{\mu_j-\be}{\mu_k-\be}\al_j^2\\
    &= (\mu_j-\be)\left(1+\be\sumk\frac{1}{\mu_k-\be}\right)\al_j^2
     = (\mu_j-\be)g(\be)\al_j^2.
\end{align*}
Consequently the system \eqref{equ:es-2} has a synchronized solution branch in the product space $\R\times\h$ provided
\begin{equation}\label{equ:cond-beta}
(\be-\mu_j)g(\be)>0\quad\text{ for all $j=1,\dots,n$.}
\end{equation}
Moreover, this branch is uniquely determined by setting $\al_j=\big((\be-\mu_j)g(\be)\big)^{-1/2}$ in \eqref{equ:u_j}.

Now we discuss  \eqref{equ:cond-beta} case by case. In the focusing case condition \eqref{equ:cond-beta} is satisfied precisely for $\be\in(-\infty,\bar{\be})$. In the defocusing case \eqref{equ:cond-beta} holds if, and only if, $\be\in(-\infty,\mu_1)\cup(\mu_n,\bar{\be})$. In the mixed cases $\mu_1\le\dots\le\mu_k<0<\mu_{k+1}\le\dots\le\mu_n$ with $1\le k\le n-1$, we consider the sign of $(\be-\mu_j)g(\be)$ by studying the auxiliary functions:
\begin{equation*}
 c_j(\be):=\frac{\be-\mu_j}{\prod_{k=1}^n(\mu_k-\be)}\quad\text{and}\quad
 G(\be):=\prod_{k=1}^n(\mu_k-\be)+\be\sum_{l=1}^n\prod_{k\ne l}(\mu_k-\be).
\end{equation*}
With these notations there holds $(\be-\mu_j)g(\be)=c_j(\be)G(\be)$.
For $\be\in(-\infty, \mu_1)$ one has $c_j(\be)<0$, $G(\mu_1)=\mu_1\prod_{k=2}^n(\mu_k-\mu_1)<0$, and
\begin{align*}
 G'(\be) &= -\sum_{j=1}^n\prod_{k\ne j}(\mu_k-\be)+\sum_{j=1}^n\prod_{k\ne j}(\mu_k-\be)-\be\sum_{j=1}^n\left(\sum_{i\ne j}\prod_{k\ne i, j}(\mu_k-\be)\right)\\
 &= -\be\sum_{j=1}^n\left(\sum_{i\ne j}\prod_{k\ne i, j}(\mu_k-\be)\right)>0.
\end{align*}
Thus $c_j(\be)G(\be)>0$ for all $j=1,\dots,n$.
For $\be\in(\mu_n, \infty)$ we distinguish between the cases $n$ being odd or even.
If $n$ is odd then $c_j(\be)<0$ for all $j=1,\dots,n$. Moreover,
$G(\mu_n) = \mu_n\prod_{k=1}^{n-1}(\mu_k-\mu_n) > 0$ and
\begin{equation*}
 G'(\be) = -\be\sum_{l=1}^n\left(\sum_{i\ne l}\prod_{k\ne i, l}(\mu_k-\be)\right) > 0.
\end{equation*}
This implies $c_j(\be)G(\be)<0$ for all $j=1,\dots,n$. In the case $n$ even we have $c_j(\be)>0$ for all $j=1,\dots,n$. Since $G(\mu_n) = \mu_n\prod_{k=1}^{n-1}(\mu_k-\mu_n) < 0$ and
\begin{equation*}
 G'(\be) = -\be\sum_{l=1}^n\left(\sum_{i\ne l}\prod_{k\ne i, l}(\mu_k-\be)\right) < 0
\end{equation*}
there holds $c_j(\be)G(\be) < 0$ for all $j=1,\dots,n$. Finally, for $\be\in(\mu_1,\mu_n)\setminus\{\mu_2,\dots,\mu_{n-1}\}$ we have that $\be-\mu_j$ is always positive for some $j$ and negative for the others. Therefore, for any fixed $\be$, there exist at least one $1\le j\le n$ such that $(\be-\mu_j)g(\be)<0$. In conclusion, in all mixed cases \eqref{equ:cond-beta} holds only for $\be\in(-\infty,\mu_1)$. This finishes the proof of Theorem~\ref{thm:synchr-branch}.

\section{The linearized system and possible bifurcation points}\label{sec:poss-bif}
In this section, we find all possible bifurcation parameters with respect to $\tw$, that is the values of $\be$ such that system \eqref{equ:es-2} has nontrivial kernel space. We consider the relaxed system
\begin{equation}\label{equ:EllipticsystemRelaxed}
 \left\{\begin{array}{ll}
         -\De u_j - u_j = \mu_ju_j^3+\be\sum_{k\ne j}u_k^2u_j,\\
         u_j=0 \ \ \hbox{on}\ \partial\Om,
        \end{array}
 \right.
\end{equation}
where we dropped the sign condition on the $u_j$'s. Local bifurcations of solution to \eqref{equ:EllipticsystemRelaxed} will be studied first, then using the Maximum Principle we will show that the bifurcating solutions are indeed positive, therefore they are also bifurcating solutions to \eqref{equ:es-2}.

We need to linearize system \eqref{equ:EllipticsystemRelaxed} at a solution $\bfu=(\al_1\om,\dots,\al_n\om)$ with $\al_j=[(\be-\mu_j)g(\be)]^{-1/2}$, in the direction
$\mathbf{\phi}=(\phi_1,\phi_2,\dots,\phi_n)\in\h$. Setting $\ga_j=\ga_j(\be)=(\mu_j-\be)^{-1/2}$ we compute:
{\allowdisplaybreaks
\begin{align*}
 -\De\phi_j-\phi_j
  &= 3\mu_j\al_j^2\om^2\phi_j + \be\sum_{k\ne j}\al_k^2\om^2\phi_j
      + 2\be\sum_{k\ne j}\al_j\al_k\om^2\phi_k\\
  &= \left(\frac{3\mu_j}{\be-\mu_j}
      + \be\sum_{k\ne j}\frac{1}{\be-\mu_k}\right)\frac{\om^2}{g(\be)}\phi_j
      + 2\be\sum_{k\ne j}\frac{\ga_j\ga_k}{g(\be)}\phi_k\om^2\\
  &= \left(\frac{2\mu_j}{\be-\mu_j} - 1 - \be\sumk\frac{1}{\mu_k-\be}\right)\frac{\om^2}{g(\be)}\phi_j
      + \frac{2\be\om^2}{g(\be)} \sum_{k\ne j}\ga_j\ga_k\phi_k\\
  &= \left(2\mu_j\ga_j^2-g(\be)\right)\frac{\om^2}{g(\be)}\phi_j
      + \frac{2\be\om^2}{g(\be)} \sum_{k\ne j}\ga_j\ga_k\phi_k\\
  &= \frac{2\om^2}{g(\be)}\left(\mu_j\ga_j^2\phi_j + \be\sum_{k\ne j}\ga_j\ga_k\phi_k\right)
      - \om^2\phi_j.
\end{align*}
}
Denote $C(\be)=\frac{2}{g(\be)}D(\be)-E_n$, where $E_n$ is the $n\times n$ identity matrix and
\begin{equation*}
 D(\be)=\left(\begin{array}{cccc}
        \mu_1\ga_1^2 & \be\ga_1\ga_2 & \cdots & \be\ga_1\ga_n\\
        \be\ga_2\ga_1 & \mu_2\ga_2^2 & \cdots & \be\ga_2\ga_n\\
        \vdots & \vdots & \ddots & \vdots\\
        \be\ga_n\ga_1 & \be\ga_n\ga_2 & \cdots & \mu_n\ga_n^2
       \end{array}
 \right).
\end{equation*} Then the linearized system becomes
\begin{equation}\label{equ:linearizedSystem}
 -\De \phi-\phi=\om^2C(\be)\phi.
\end{equation}
System \eqref{equ:linearizedSystem} must have a nontrivial solution $\phi$ in order that $\be$ is a bifurcation parameter. It is more convenient to rewrite system \eqref{equ:linearizedSystem} in terms of the eigenvectors of $C(\be)$ and then determine the possible bifurcation parameters by comparing with the scalar eigenvalue problem \eqref{equ:scalareigenpro}.

\begin{lemma}
 $C(\be)$ has the eigenvalue $-3$ and corresponding eigenvector
 $b_1(\be) = (\ga_1(\be),\dots,\ga_n(\be))^\top$.
\end{lemma}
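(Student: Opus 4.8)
The plan is to reduce the claim to a statement about the matrix $D(\be)$ alone. Since $C(\be)=\frac{2}{g(\be)}D(\be)-E_n$, the equation $C(\be)b_1=-3b_1$ is equivalent to $\frac{2}{g(\be)}D(\be)b_1=-2b_1$, i.e. to
\[
 D(\be)\,b_1(\be) = -g(\be)\,b_1(\be).
\]
So it suffices to verify that $b_1(\be)=(\ga_1,\dots,\ga_n)^\top$ is an eigenvector of $D(\be)$ with eigenvalue $-g(\be)$; note that $b_1\ne0$ since every $\ga_j\ne0$, so this is a genuine eigenvector.

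First I would write out the $j$-th component of $D(\be)b_1$. Using that the diagonal entries of $D(\be)$ are $\mu_j\ga_j^2$ and the off-diagonal entries in row $j$ are $\be\ga_j\ga_k$, while the $k$-th entry of $b_1$ is $\ga_k$, one obtains
\[
 \big(D(\be)b_1\big)_j
 = \mu_j\ga_j^2\,\ga_j + \be\sum_{k\ne j}\ga_j\ga_k\,\ga_k
 = \ga_j\Big(\mu_j\ga_j^2 + \be\sum_{k\ne j}\ga_k^2\Big).
\]
Thus the whole problem collapses to evaluating the scalar factor in parentheses and showing it equals $-g(\be)$, uniformly in $j$.

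The final step is this scalar identity. Recalling from the derivation of $D(\be)$ that $\mu_j\ga_j^2=\mu_j/(\be-\mu_j)$, equivalently $\ga_j^2=(\be-\mu_j)^{-1}$, I would split off a constant from the diagonal term,
\[
 \mu_j\ga_j^2 = \frac{\mu_j}{\be-\mu_j} = -1 + \frac{\be}{\be-\mu_j} = -1 + \be\ga_j^2,
\]
and absorb the extra $\be\ga_j^2$ into the sum so as to complete $\sum_{k\ne j}$ to $\sumk$:
\[
 \mu_j\ga_j^2 + \be\sum_{k\ne j}\ga_k^2
 = -1 + \be\sumk\ga_k^2
 = -1 - \be\sumk\frac{1}{\mu_k-\be}
 = -g(\be).
\]
Combining the last two displays gives $\big(D(\be)b_1\big)_j=-g(\be)\ga_j$ for every $j$, hence $D(\be)b_1=-g(\be)b_1$ and therefore $C(\be)b_1=\frac{2}{g(\be)}\big(-g(\be)\big)b_1-b_1=-3b_1$, as claimed. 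There is no substantial obstacle here; the computation is routine linear algebra. The only point that requires care is the sign bookkeeping between $\be-\mu_j$ and $\mu_j-\be$: it is precisely this that turns the completed sum $\be\sumk\ga_k^2$ into $-\be\sumk(\mu_k-\be)^{-1}$ and so recovers $-g(\be)$ (and hence the eigenvalue $-3$) rather than $+g(\be)$.
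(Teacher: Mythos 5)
Your proof is correct and follows essentially the same route as the paper: a direct componentwise computation showing $D(\be)b_1(\be)=-g(\be)\,b_1(\be)$, from which $C(\be)b_1=\tfrac{2}{g(\be)}\bigl(-g(\be)\bigr)b_1-b_1=-3b_1$. Note also that your sign convention $\ga_j^2=(\be-\mu_j)^{-1}$ (rather than the literal definition $\ga_j=(\mu_j-\be)^{-1/2}$ stated in the paper, apparently a typo) is precisely the convention the paper's own computation relies on, so the two arguments agree line for line.
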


\begin{proof} A direct calculation shows
{\allowdisplaybreaks
\begin{align*}
 D(\be)b_1(\be)&=\left(\begin{array}{cccc}
                           \mu_1\ga_1^2 & \be\ga_1\ga_2 & \cdots & \be\ga_1\ga_n\\
        \be\ga_2\ga_1 & \mu_2\ga_2^2 & \cdots & \be\ga_2\ga_n\\
        \vdots & \vdots & \ddots & \vdots\\
        \be\ga_n\ga_1 & \be\ga_n\ga_2 & \cdots & \mu_n\ga_n^2
                          \end{array}
 \right)\left(\begin{array}{c}
               \ga_1\\ \ga_2\\ \vdots\\ \ga_n
              \end{array}
\right)\\
&=\left(\begin{array}{c}
        \ga_1(\mu_1\ga_1^2+\be\sum_{k\ne1}\ga_k^2)\\
        \ga_2(\mu_2\ga_2^2+\be\sum_{k\ne2}\ga_k^2)\\
        \vdots\\
        \ga_n(\mu_n\ga_n^2+\be\sum_{k\ne n}\ga_k^2)
       \end{array}
 \right)=\left(\begin{array}{c}
        \ga_1[(\mu_1-\be)\ga_1^2+\be\sumk\ga_k^2)]\\
        \ga_2[(\mu_2-\be)\ga_2^2+\be\sumk\ga_k^2)]\\
        \vdots\\
        \ga_1[(\mu_n-\be)\ga_n^2+\be\sumk\ga_k^2)]\\
       \end{array}
 \right)\\
&=\left(\begin{array}{c}
         \ga_1[-1-\be\sumk(\mu_k-\be)^{-1}]\\
         \ga_2[-1-\be\sumk(\mu_k-\be)^{-1}]\\
         \vdots\\
         \ga_n[-1-\be\sumk(\mu_k-\be)^{-1}]
        \end{array}
 \right)=-g(\be)\left(\begin{array}{c}
                         \ga_1\\
                         \ga_2\\
                         \vdots\\
                         \ga_n
                        \end{array}
                  \right)=-g(\be)b_1(\be).
\end{align*}}
Therefore $C(\be)b_1(\be)=-2b_1(\be)-b_1(\be)=-3b_1(\be)$.
\end{proof}

\begin{lemma}
  The number $f(\be)=-\frac{2}{g(\be)}-1$ is an eigenvalue of $C(\be)$ with multiplicity $n-1$.
\end{lemma}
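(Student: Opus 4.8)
The plan is to recognize $D(\be)$ as a rank-one modification of a scalar matrix, which lets one read off its entire spectrum---and hence that of $C(\be)$---from a single trace computation, recovering both the eigenvalue $-3$ of the previous lemma and the eigenvalue $f(\be)$ at once.

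First I would set $R=D(\be)+E_n$ and inspect its entries. Off the diagonal one already has $R_{jk}=\be\ga_j\ga_k$. On the diagonal, the relation $(\mu_j-\be)\ga_j^2=-1$ used in the proof of the previous lemma turns $R_{jj}=\mu_j\ga_j^2+1$ into $\be\ga_j^2$. Thus \emph{every} entry of $R$ equals $\be\ga_j\ga_k$, so $R$ is a real symmetric matrix of rank one. A rank-one matrix carries the eigenvalue $0$ with multiplicity $n-1$ and its trace as the remaining simple eigenvalue; here $\mathrm{tr}\,R=\be\sumj\ga_j^2=1-g(\be)$, with nonzero eigendirection $b_1(\be)$. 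Subtracting $E_n$, the matrix $D(\be)=R-E_n$ therefore has the eigenvalue $-1$ of multiplicity $n-1$ (eigenspace $\ker R$) and the simple eigenvalue $-g(\be)$.

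Next I would transport this through $C(\be)=\frac{2}{g(\be)}D(\be)-E_n$. On the $(n-1)$-dimensional space $\ker R$ one computes
\begin{equation*}
 C(\be)w=\frac{2}{g(\be)}(-w)-w=\left(-\frac{2}{g(\be)}-1\right)w=f(\be)\,w,
\end{equation*}
while on $b_1(\be)$ one gets $\frac{2}{g(\be)}(-g(\be))-1=-3$, consistent with the previous lemma. Hence $f(\be)$ is an eigenvalue of $C(\be)$ of multiplicity at least $n-1$.

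The step I expect to require the most care is verifying that the multiplicity is \emph{exactly} $n-1$. The only remaining eigendirection is $b_1(\be)$, on which $C(\be)$ acts by $-3$, so the count is exact unless $f(\be)=-3$, i.e. $g(\be)=1$. Ruling out this degenerate coincidence is the main obstacle; it is harmless here since $g(\be)=1$ forces $\be=0\notin I$, and in any case $f(\be)=-3$ can never meet the bifurcation condition $f(\be)=\la_k\ge\la_1=-1$. A secondary subtlety is that $\ga_j$, and thus $b_1(\be)$, may be complex in some parameter ranges; this does not affect the argument, because the matrix $R$ has real entries and the rank-one spectral fact is applied directly to the real symmetric $R$.
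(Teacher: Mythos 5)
Your argument is correct in substance but follows a genuinely different route from the paper. The paper proves the lemma by guessing $n-1$ explicit eigenvectors $b_j(\be)$ with $b_{j1}=\ga_j$, $b_{jj}=-\ga_1$, $b_{jk}=0$ otherwise, and verifying $D(\be)b_j=-b_j$ by direct matrix multiplication, whence $C(\be)b_j=f(\be)b_j$. You instead observe that $D(\be)+E_n=\be\, b_1(\be)b_1(\be)^\top$ is a real symmetric matrix of rank (at most) one, so its spectrum is $0$ with multiplicity $n-1$ plus its trace $1-g(\be)$; passing through $C(\be)=\frac{2}{g(\be)}D(\be)-E_n$ then yields the entire spectrum of $C(\be)$ at once: $f(\be)$ on $\ker\bigl(D(\be)+E_n\bigr)=b_1(\be)^\perp$ and $-3$ on $b_1(\be)$, re-proving the previous lemma as a by-product. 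The two proofs are equivalent at bottom (the paper's identity $D(\be)b_j=-b_j$ says exactly that $b_j\in\ker(D(\be)+E_n)$), but yours is more structural: it explains where the $(n-1)$-fold eigenvalue comes from rather than verifying it, and it identifies the eigenspace intrinsically as $\{\bfv:\sum_{j=1}^n\ga_j v_j=0\}$ --- precisely the description the paper uses later for $V_{\be_k}^0$ in the Morse index computation. You are also right to work with the relation $(\mu_j-\be)\ga_j^2=-1$: although the paper states $\ga_j=(\mu_j-\be)^{-1/2}$, its own computations (and the derivation of $C(\be)$ from the linearization) use $\ga_j^2=(\be-\mu_j)^{-1}$, and your handling of the possibly non-real $\ga_j$ via the realness of $R$ is sound.

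One point in your exactness discussion is factually wrong, though harmlessly so. You claim that $g(\be)=1$ forces $\be=0\notin I$; but in the defocusing case $g(0)=1>0$ and $g$ is decreasing on $(\mu_n,\infty)$, so $0\in(\mu_n,\bar\be)\subset I$. At this point $R=0$ has rank zero (not one), $D(0)=-E_n$, and $C(0)=-3E_n=f(0)E_n$, so the eigenvalue $f(0)=-3$ actually has multiplicity $n$. Thus ``exactly $n-1$'' fails at this single parameter value --- note this is equally an imprecision in the paper's statement, whose proof, like the core of yours, only establishes multiplicity at least $n-1$, which is all that is ever used. Your fallback observation is the correct repair: any $\be$ relevant for bifurcation satisfies $f(\be)=\la_k\ge\la_1=-1>-3$, so the degenerate coincidence never occurs there.
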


\begin{proof} We define $n-1$ linearly independent vectors $b_j(\be)=(b_{j1},\dots,b_{jn})^\top$, $j=2,\dots,n$, as follows:
\begin{center}
 $b_{j1}=\ga_j$, $b_{jj}=-\ga_1$ and $b_{jk}=0$ for $k\ne1$ and $k\ne j$.
\end{center}
Clearly $b_j(\be)$ is orthogonal to $b_1(\be)$ for $j=2,\dots,n$. Applying $D(\be)$ to $b_j(\be)$ for $j\ge2$ yields
{\allowdisplaybreaks
\begin{align*}
 D(\be)b_j(\be)&=\left(\begin{array}{cccc}
                           \mu_1\ga_1^2 & \be\ga_1\ga_2 & \cdots & \be\ga_1\ga_n\\
        \be\ga_2\ga_1 & \mu_2\ga_2^2 & \cdots & \be\ga_2\ga_n\\
        \vdots & \vdots & \ddots & \vdots\\
        \be\ga_n\ga_1 & \be\ga_n\ga_2 & \cdots & \mu_n\ga_n^2
                          \end{array}
 \right)\left(\begin{array}{c}
               \ga_j\\ \vdots\\ -\ga_1\\ \vdots\\ 0
              \end{array}
\right)=\left(\begin{array}{c}
               (\mu_1-\be)\ga_1^2\ga_j\\
               \vdots\\
               (\be-\mu_j)\ga_j^2\ga_1\\
               \vdots\\
               0
              \end{array}
        \right)=-b_j(\be).
\end{align*}
}
Consequently,
$$
C(\be)b_j(\be) = \left(\frac{2}{g(\be)}D(\be)-E_n\right)b_j(\be)
 = \left(-\frac{2}{g(\be)}-1\right)b_j(\be) = f(\be)b_j(\be),
$$
hence $f(\be)$ is an eigenvalue of $C(\be)$ with $n-1$ eigenvectors $b_j(\be)$, $2\le j\le n$.
\end{proof}

Since $C(\be)$ is a real symmetric matrix, we can use its eigenvectors to construct an orthogonal matrix $T(\be)$ which diagonalizes $C(\be)$, that is $T^{-1}(\be)C(\be)T(\be)=\hbox{diag}(-3, f(\be), \dots, f(\be))$. Moreover, since $b_j(\be)$ depends smoothly on $\be$ we may assume that $T(\be)$ also depends smoothly on $\be$. The linearized system of \eqref{equ:EllipticsystemRelaxed} now is equivalent to
\begin{equation}\label{equ:equivlinearized}
 \left\{\begin{array}{l}
        -\De\psi_1-\psi_1=-3\om^2\psi_1,\\
        -\De\psi_j-\psi_j=f(\be)\om^2\psi_j,\ j=2,\dots,n.
       \end{array}
 \right.
\end{equation}
The principal eigenvalue of \eqref{equ:scalareigenpro} is $-1$, thus the first equation of \eqref{equ:equivlinearized} only has the zero solution. As a result, a nontrivial solution component of \eqref{equ:equivlinearized} must come from the remaining $n-1$ equations. Thus we need to find all solutions of the equations $f(\be)=\la_k$, $k\ge1$. Since the number and the location of the bifurcation parameters depend on the $\mu_j$'s, we will find the local bifurcations case by case.

\begin{lemma}\label{lemma:fParametersFocusing}
In the focusing case $0<\mu_1\le\cdots\le\mu_n$ there are infinitely many possible bifurcation parameters. More precisely, these parameters are determined by the equations $f(\be)=\la_k$ which has a (unique) solution for all but a finite number of $k\in\mathbb{N}$.
\end{lemma}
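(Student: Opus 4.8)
The plan is to reduce the solvability of $f(\be)=\la_k$ to a monotonicity-and-range analysis of $f$ on the parameter interval $I=(-\infty,\bar\be)$ on which the synchronized branch $\tw$ lives. First I would pin down the behavior of $g$ on $I$. In the focusing case $g'(\be)=\sum_{j=1}^n\mu_j/(\mu_j-\be)^2>0$, so $g$ is strictly increasing; moreover $g$ has no pole on $I$, because the smallest of its vertical asymptotes, $\be=\mu_1$, satisfies $\mu_1>0>\bar\be$. Combining $g(\bar\be)=0$ with $\lim_{\be\to-\infty}g(\be)=1-n$, I conclude that $g$ maps $I$ strictly increasingly onto $(1-n,0)$; in particular $g<0$ throughout $I$.

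Next I would transfer this to $f=-1-2/g$. Since $g<0$ and $g^2>0$, the derivative $f'=2g'/g^2$ is strictly positive, so $f$ is strictly increasing on $I$. Reading off the endpoint limits, $f(\be)\to -1-2/(1-n)=(3-n)/(n-1)$ as $\be\to-\infty$, while $f(\be)\to+\infty$ as $\be\to\bar\be^-$ (because $g\to0^-$ there). Hence $f$ is a strictly increasing bijection from $I$ onto the open half-line $\big((3-n)/(n-1),\infty\big)$.

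The conclusion is then immediate. For each $k$ the equation $f(\be)=\la_k$ has a solution in $I$ if and only if $\la_k>(3-n)/(n-1)$, and when it exists the solution is unique by strict monotonicity. Since $\la_1<\la_2<\cdots$ with $\la_k\to\infty$, only an initial (hence finite) segment of the eigenvalues can fail to exceed the threshold $(3-n)/(n-1)$; therefore $f(\be)=\la_k$ has a unique solution for all but finitely many $k$, producing infinitely many bifurcation parameters on $\tw$. The computations are all elementary, so there is no serious obstacle here; the only points requiring genuine care are verifying that $I$ contains no pole of $g$ (so that $g$, and hence $f$, is truly continuous and monotone on all of $I$) and correctly observing that the left endpoint $(3-n)/(n-1)$ of the range is a non-attained limit — which is exactly what makes the exceptional set of $k$ finite rather than empty.
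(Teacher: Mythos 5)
Your proof is correct and follows essentially the same route as the paper: both establish $g'>0$, hence $f'=2g'/g^2>0$ on $(-\infty,\bar\be)$, compute the endpoint limits $(3-n)/(n-1)$ and $+\infty$, and conclude that $f(\be)=\la_k$ has a unique solution exactly when $\la_k>(3-n)/(n-1)$, which excludes only finitely many $k$. Your extra remarks --- that $\bar\be<0<\mu_1$ keeps the poles of $g$ out of $I$, and that the left endpoint of the range is not attained --- are worthwhile details the paper leaves implicit.
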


\begin{proof} In this case, $\tw$ exists on the interval $(-\infty, \bar{\be})$, where $\bar{\be}$ is the unique value of $g(\be)=0$ in $(-\infty, \mu_1)$. As proved above the equation $f(\be)=-1-\frac{2}{g(\be)} = \la_k$ on $(-\infty, \bar{\be})$ determines the bifurcation parameters.

Note that $f$ is a rational function and is smooth on $(-\infty, \bar{\be})$ with vertical asymptote $\be=\bar{\be}$. Recall that
\begin{equation*}
 \lim_{\be\to-\infty}g(\be) = 1-n < 0,\hspace{0.5cm}
 g(\bar\be) = 0,\hspace{0.5cm}
 g'(\be)=\sumk\frac{\mu_k}{(\mu_k-\be)^2} > 0,
\end{equation*}
therefore
\begin{equation*}
 \lim_{\be\to-\infty}f(\be) = -1-\frac{2}{1-n},\hspace{0.5cm}
 \lim_{\be\to\bar{\be}^-}f(\be) = \infty,\hspace{0.5cm}
 f'(\be)=\frac{2g'(\be)}{[g(\be)]^2} > 0.
\end{equation*}
According to the behavior of $f$, $f(\be)=\la_k$ has a unique solution for all $\la_k$ satisfying $$\la_k>-1-\frac{2}{1-n}=\frac{3-n}{n-1}.$$ Since $\la_k\to\infty$, this inequality is satisfied for all but finitely many values of $k\in\N$.
\end{proof}

\begin{lemma}\label{lemma:fParametersDefocusing}
 In the defocusing case $\mu_1\le\dots\le\mu_n<0$ there are at most finitely many possible bifurcation parameters.
\end{lemma}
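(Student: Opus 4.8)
The plan is to show that $f$ is bounded above on the existence interval $I=(-\infty,\mu_1)\cup(\mu_n,\bar\be)$ of $\tw$. The possible bifurcation parameters are exactly the solutions of $f(\be)=\la_k$ coming from the reduced system \eqref{equ:equivlinearized}, and the eigenvalues satisfy $\la_k\to\infty$. Hence only finitely many of them can lie below any fixed upper bound for $f$, and for those $k$ the equation $f(\be)=\la_k$ can be solvable, giving the finiteness claim at once. Since $f=-1-2/g$, everything reduces to controlling the range of $g$.

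First I would analyze $g$ on each of the two components of $I$ separately, using that $g'<0$ throughout the defocusing case by \eqref{equ:g} (all $\mu_j<0$). On $(-\infty,\mu_1)$ the function $g$ is strictly decreasing, from $\lim_{\be\to-\infty}g(\be)=1-n$ down to $\lim_{\be\to\mu_1^-}g(\be)=-\infty$; the latter limit holds because $\be/(\mu_1-\be)\to-\infty$ as $\be\to\mu_1^-$, since $\mu_1<0$. Thus $g(\be)<1-n<0$ on this component. On $(\mu_n,\bar\be)$, $g$ decreases from $\lim_{\be\to\mu_n^+}g(\be)=+\infty$ (as recorded in the introduction) to $g(\bar\be)=0$, so $g>0$ there.

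Translating these into bounds on $f=-1-2/g$: on $(-\infty,\mu_1)$ one gets $2/g\in\bigl(-2/(n-1),\,0\bigr)$, hence $f\in\bigl(-1,\,\tfrac{3-n}{n-1}\bigr)$; on $(\mu_n,\bar\be)$ one has $g>0$, hence $f<-1$. In particular $f<\tfrac{3-n}{n-1}$ on all of $I$, the desired upper bound, and since $\la_k\to\infty$ only finitely many indices $k$ satisfy $\la_k<\tfrac{3-n}{n-1}$. I would also note, to recover the uniqueness asserted in the remark, that $\la_k\ge-1$ for every $k$ while $f<-1$ on the second component, so every solution of $f(\be)=\la_k$ in fact lies in $(-\infty,\mu_1)$; there $f'=2g'/g^2<0$ is strictly monotone, making each such solution unique.

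The \emph{main obstacle} is not computational but structural: one must correctly pin down the one-sided limits of $g$ at the two finite endpoints $\mu_1^-$ and $\mu_n^+$, where the sign of the blow-up is governed by the signs of $\mu_1$ and $\mu_n$ (both negative here). The decisive contrast with the focusing case in Lemma~\ref{lemma:fParametersFocusing} is that here $g\to0^+$ at the relevant zero $\bar\be$, forcing $f\to-\infty$ rather than $f\to+\infty$; this sign flip is exactly what caps the range of $f$ and turns infinitely many bifurcation points into finitely many. Everything else is monotonicity bookkeeping.
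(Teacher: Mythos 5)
Your proposal is correct and takes essentially the same approach as the paper: you analyze $g$ on the two components $(-\infty,\mu_1)$ and $(\mu_n,\bar\be)$ via $g'<0$ and the endpoint limits, deduce that $f=-1-2/g$ ranges in $\bigl(-1,\tfrac{3-n}{n-1}\bigr)$ on the first component and stays below $-1$ on the second, and conclude finiteness from $\la_k\to\infty$ (and uniqueness from the strict monotonicity of $f$, plus $\la_k\ge-1$ ruling out the second component), exactly as in the paper's proof. The only difference is presentational: you phrase the argument as bounding the range of $f$, while the paper computes the same limits of $f$ directly and invokes monotonicity.
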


\begin{proof} In this case, the synchronized solution branch $\tw$ exists for $\be\in(-\infty,\mu_1)\cup(\mu_n,\bar{\be})$, where $\bar{\be}$ is the unique number in $(\mu_n,\infty)$ such that $g(\bar{\be})=0$. In the interval $(-\infty, \mu_1)$, we have
\begin{equation*}
 \lim_{\be\to-\infty}g(\be)=1-n,\hspace{0.5cm}
 \lim_{\be\to\mu_1^-}g(\be)=-\infty,\hspace{0.5cm}
 g'(\be)=\sumk\frac{\mu_k}{(\mu_k-\be)^2}<0.
\end{equation*}
According to the relation $f(\be)=-1-\frac{2}{g(\be)}$, there holds
\begin{equation*}
 \lim_{\be\to-\infty}f(\be)=-1+\frac{2}{n-1},\hspace{0.5cm}
 \lim_{\be\to\mu_1^-}f(\be)=-1,\hspace{0.5cm}
 f'(\be)=\frac{2g'(\be)}{[g(\be)]^2}<0.
\end{equation*}
Therefore the equation $f(\be)=\la_k$ can only be solved if
\begin{equation}\label{equ:lambda_k}
-1<\la_k<-1+\frac{2}{n-1}.
\end{equation}
The monotonicity of $f$ also implies that $f(\be)=\la_k$ has at most one solution for each $k$.

In the interval $(\mu_n,\bar{\be})$ we have
\begin{equation*}
 \lim_{\be\to\mu_n^+}g(\be)=\infty,\hspace{0.5cm}
 \lim_{\be\to\bar{\be}^-}g(\be)=0,\hspace{0.5cm}
 g'(\be)=\sumk\frac{\mu_k}{(\mu_k-\be)^2}<0.
\end{equation*}
Accordingly, we obtain
\begin{equation*}
 \lim_{\be\to\mu_n^+}f(\be) = -1,\hspace{0.5cm}
 \lim_{\be\to\bar{\be}^-}f(\be) = -\infty,\hspace{0.5cm}
 f'(\be) = \frac{2g'(\be)}{[g(\be)]^2} < 0.
\end{equation*}
Since all eigenvalues of \eqref{equ:scalareigenpro} are greater than or equal to $-1$, there is no bifurcation parameter in the interval $(\mu_n,\bar{\be})$.
\end{proof}

\begin{remark}
Equation \eqref{equ:lambda_k} may have no solution at all. If $n\to\infty$ the range for the eigenvalues $\la_k$ of \eqref{equ:scalareigenpro} to satisfy \eqref{equ:lambda_k} becomes smaller and, for $n$ large no eigenvalue satisfies \eqref{equ:lambda_k}.
\end{remark}

\begin{lemma}\label{lemma:fParametersMixed}
 In all mixed cases there are at most finitely many possible bifurcation parameters.
\end{lemma}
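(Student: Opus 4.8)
The plan is to follow the same template as Lemmas~\ref{lemma:fParametersFocusing} and \ref{lemma:fParametersDefocusing}, but to replace the monotonicity argument—which is not available here—by a boundedness argument. By Theorem~\ref{thm:synchr-branch} the branch $\tw$ exists in the mixed cases only for $\be\in I=(-\infty,\mu_1)$, so the possible bifurcation parameters are exactly the solutions of $f(\be)=\la_k$ lying in $I$. The first thing I would record is that $g(\be)<0$ throughout $I$: for $\be<\mu_1\le\mu_j$ one has $\be-\mu_j<0$, and the existence condition $(\be-\mu_j)g(\be)>0$ established in Section~2 then forces $g(\be)<0$ for every $j$. In particular $f(\be)=-1-\frac{2}{g(\be)}>-1$ on $I$, so no eigenvalue strictly below $-1$ is relevant and $\la_1=-1$ itself is never attained.

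The difficulty, and the reason the focusing and defocusing arguments do not transfer, is that $g'(\be)=\sumj\mu_j(\mu_j-\be)^{-2}$ is now a sum of terms of both signs, so neither $g$ nor $f$ need be monotone on $I$ and one cannot simply count preimages. Instead I would show that $f$ is bounded above on $I$, which is equivalent to showing that $g$ stays bounded away from $0$. For this I would combine the boundary behaviour with a compactness argument. As recorded in the paper, $\lim_{\be\to-\infty}g(\be)=1-n<0$, while $\lim_{\be\to\mu_1^-}g(\be)=-\infty$, since for $\be\to\mu_1^-$ the term $\be/(\mu_1-\be)$ diverges to $-\infty$ (recall $\mu_1<0$ in the mixed cases). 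Hence I may fix $R$ large and $\de>0$ small so that $g$ is bounded above by a negative constant on $(-\infty,-R)$ and on $(\mu_1-\de,\mu_1)$; on the remaining compact interval $[-R,\mu_1-\de]$ the continuous, strictly negative function $g$ attains a negative maximum. Taking the largest of these three negative bounds yields a constant $M<0$ with $g(\be)\le M$ on all of $I$, and therefore $f(\be)=-1-\frac{2}{g(\be)}\le -1+\frac{2}{|M|}$ on $I$.

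With $f$ bounded above the conclusion is immediate: since $\la_k\to\infty$, the equation $f(\be)=\la_k$ can be solvable only for the finitely many $k$ with $\la_k\le -1+2/|M|$. For each such $k$ (necessarily $k\ge2$, so $\la_k>-1$) the equation $f(\be)=\la_k$ is equivalent to $g(\be)=-\frac{2}{\la_k+1}$, which after clearing the denominators $\prod_k(\mu_k-\be)$ becomes a polynomial equation of degree at most $n$ and hence has at most $n$ roots. Thus only finitely many values $\be\in I$ can be bifurcation parameters, which proves the lemma. I expect the only genuinely delicate point to be the justification that $g$ is bounded away from $0$; once the loss of monotonicity is circumvented by this compactness estimate, the remaining steps are routine. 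Note that, consistently with the statement of Theorem~\ref{thm:bif}, this argument does \emph{not} assert uniqueness of the solution of $f(\be)=\la_k$, precisely because the non-monotonicity of $f$ genuinely permits several preimages.
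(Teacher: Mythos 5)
Your proof is correct, and it is actually tighter than the paper's own argument, which shares the same skeleton but is loose in two places that you repair. The paper computes the same two limits ($f\to-1+\frac{2}{n-1}$ as $\be\to-\infty$ and $f\to-1$ as $\be\to\mu_1^-$) and then asserts, ``similar to the defocusing case,'' that $f(\be)=\la_k$ is solvable only if $-1<\la_k<-1+\frac{2}{n-1}$; but since $f$ need not be monotone in the mixed cases---as the paper itself concedes in the very next paragraph, where it allows a maximum value $f_{max}>-1+\frac{2}{n-1}$---the endpoint limits alone do not bound the range of $f$, and the correct solvability condition is $-1<\la_k\le\sup_I f$. What is genuinely needed, and what you supply, is that $\sup_I f<\infty$: your observation that $g<0$ on $I=(-\infty,\mu_1)$ (forced by the existence condition $(\be-\mu_j)g(\be)>0$), combined with the negative limits $1-n$ and $-\infty$ at the two ends and a compactness argument on the middle compact interval, gives $g\le M<0$ and hence $f\le-1+2/|M|$ on all of $I$, after which $\la_k\to\infty$ finishes the count of admissible $k$. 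Your second addition---rewriting $f(\be)=\la_k$ as $g(\be)=-2/(\la_k+1)$ and clearing denominators to get a polynomial of degree at most $n$---closes a second gap: the lemma claims finitely many bifurcation \emph{parameters}, not merely finitely many relevant $k$, and the paper only remarks that each such $k$ ``may have more than one solution'' without bounding how many. The only detail you gloss over is that this polynomial is not identically zero; this is immediate because $g$ is nonconstant on $I$ (it has a pole at $\mu_1$), so each level set of $g$ is finite. With that one-line remark added, your argument is complete and, unlike the paper's, free of the monotonicity slip.
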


\begin{proof} In all mixed cases $\tw$ exists for $\be\in(-\infty, \mu_1)$. Here we have
\begin{equation*}
 \lim_{\be\to-\infty}f(\be)=-1+\frac{2}{n-1},\hspace{0.5cm}
 \lim_{\be\to\mu_1^-}f(\be)=-1.
\end{equation*}
Similar to the defocusing case, the equation $f(\be)=\la_k$ has solutions only if $-1<\la_k<-1+\frac{2}{n-1}$, hence for at most finitely many $k$.

Observe that $f$ is not a monotone function in the mixed cases. If $f$ has a maximum value
$f_{max} > -1+\frac{2}{n-1}$, then for a fixed $\la_k$ satisfying $-1+\frac{2}{n-1} < \la_k < f_{max}$, the continuity of $f$ implies that the graph of $f(\be)$ will cross the horizontal line $\la=\la_k$ more than once, that is $f(\be)=\la_k$ has more than one solutions.
\end{proof}

\begin{remark}\label{remark:fprimeThreecases}
 In the focusing case and in the defocusing case, $f'(\be)$ has a fixed sign and is never zero in $I$. In the mixed cases, if $\sumk \mu_k < 0$, then it is easy to see that $f'(\be) \ne 0$ for $\be\in(-\infty, \mu_1)$.  But if the sum $\sumk\mu_k > 0$, then there may exist $\be\in(-\infty, \mu_1)$ such that $f'(\be)=0$. As we shall see in the next section, these facts are important in verifying local bifurcations.
\end{remark}

\section{The verification of local bifurcations}
Let $\be_k$ denote a solution of $f(\be)=\la_k$. Recall that $\be_k$ is uniquely defined in the focusing or defocusing case but not in the mixed cases. The fact that \eqref{equ:EllipticsystemRelaxed} has a nonempty kernel at $\be=\be_k$ is not sufficient to claim a bifurcation point. In this section, we use \cite[Theorem 8.9]{Mawhin-Willem:1989} to verify that these $\be_k$'s are indeed bifurcation parameters.

We denote the eigenspace of the linear eigenvalue problem \eqref{equ:scalareigenpro} by
\begin{equation*}
 V_k=\{\phi\in H_0^1(\Om)\ |\ -\Delta\phi-\phi=\la_k\omega^2\phi\},\ \ k=1,2,\dots
\end{equation*}
and set $n_k=\dim V_k$. The following lemma can be established with similar arguments as \cite[Lemma~4.1]{Bartsch:2013}. We include the proof here for the convenience of the reader.

\begin{lemma}\label{lem:morseindexDifference}
 $\be_k$ is a bifurcation parameter if $f'(\be_k)\neq0$.
\end{lemma}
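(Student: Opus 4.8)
The plan is to exploit the variational structure of the relaxed system and to detect bifurcation through a jump in the Morse index of the synchronized branch, which is exactly what Theorem~8.9 of \cite{Mawhin-Willem:1989} delivers. First I would record that \eqref{equ:EllipticsystemRelaxed} is the Euler--Lagrange equation of a $C^2$ energy functional $J_\be$ on $\h$, and that $\bfu(\be)$ is a smooth curve of critical points of $J_\be$. The Hessian $J_\be''(\bfu(\be))$ is represented, after the standard identification via the compact resolvent of $-\De$ on $H_0^1(\Om)$, by the linear operator attached to \eqref{equ:linearizedSystem}, namely $L_\be\phi=-\De\phi-\phi-\om^2C(\be)\phi$. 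Since $\phi\mapsto\om^2C(\be)\phi$ is a compact perturbation, $L_\be$ is a compact perturbation of an invertible operator, so its Morse index $m^-(\be)$ (the number of negative eigenvalues counted with multiplicity) is finite and $\ker L_\be$ is finite-dimensional.

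Next I would diagonalize $L_\be$ using the smooth orthogonal transformation $T(\be)$ constructed above; as $T(\be)$ acts only on the $n$ components it commutes with $-\De-1$ and sends $\om^2C(\be)$ to $\om^2\,\hbox{diag}(-3,f(\be),\dots,f(\be))$, so the associated quadratic form splits orthogonally. This decouples $L_\be$ into one scalar operator $-\De-1+3\om^2$ coming from the eigenvalue $-3$, together with $n-1$ identical copies of $M_\be:=-\De-1-f(\be)\om^2$ coming from the eigenvalue $f(\be)$, in accordance with \eqref{equ:equivlinearized}. Since the principal eigenvalue of \eqref{equ:scalareigenpro} is $\la_1=-1>-3$, expanding in the (complete, $\om^2$-orthogonal) eigenbasis of \eqref{equ:scalareigenpro} shows that $-\De-1+3\om^2$ is positive definite, so it contributes to neither the kernel nor the Morse index. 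Both quantities are therefore governed entirely by the $n-1$ copies of $M_\be$. For $\phi_k\in V_k$ one computes $\langle M_\be\phi_k,\phi_k\rangle=(\la_k-f(\be))\int_\Om\om^2\phi_k^2$, so that $M_\be$ is negative definite exactly on the span of those $V_k$ with $\la_k<f(\be)$. Consequently
\begin{equation*}
 m^-(\be)=(n-1)\cdot\#\{k:\la_k<f(\be)\},
\end{equation*}
with eigenvalues counted with multiplicity, and $\ker L_\be\ne\{0\}$ precisely when $f(\be)=\la_k$ for some $k$.

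Now I would invoke the hypothesis $f'(\be_k)\ne0$. Near $\be_k$ the function $f$ is then strictly monotone, so $f(\be)$ crosses the level $\la_k$ transversally and equals no eigenvalue $\la_{k'}$ for $\be\ne\be_k$ in a punctured neighbourhood of $\be_k$. Hence $L_\be$ is invertible there, while the count $\#\{k':\la_{k'}<f(\be)\}$ jumps by exactly $n_k$ as $\be$ passes through $\be_k$. Thus $m^-(\be)$ jumps by $(n-1)n_k\ge n-1>0$ across $\be_k$. A change of Morse index at a parameter value where the linearization is invertible on both sides is precisely the hypothesis of Theorem~8.9 in \cite{Mawhin-Willem:1989}, which then yields that $(\be_k,\bfu(\be_k))$ is a bifurcation point of \eqref{equ:EllipticsystemRelaxed}.

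The main obstacle, and the reason the hypothesis $f'(\be_k)\ne0$ is indispensable, lies in guaranteeing that the crossing is genuinely transversal. If $f'(\be_k)=0$ the graph of $f$ could merely touch the level $\la_k$ without crossing it, so $m^-(\be)$ would return to its original value on both sides and the Morse-index criterion would detect nothing; this is exactly the degenerate situation flagged in Remark~\ref{remark:fprimeThreecases} for the mixed cases. Apart from this, the only care needed is functional-analytic bookkeeping: checking that $J_\be$ fits the framework of \cite{Mawhin-Willem:1989}, and that the orthogonal change of frame $T(\be)$ preserves the inner product so that the Morse index of $L_\be$ is truly the sum of the Morse indices of its diagonal blocks.
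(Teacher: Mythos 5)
Your proposal is correct, and it lives in the same framework as the paper's proof: the same energy functional $J_\be$, the same diagonalization of $C(\be)$ by the smooth orthogonal matrix $T(\be)$, and the same bifurcation criterion, namely a jump of the Morse index across $\be_k$ combined with non-degeneracy for nearby $\be\ne\be_k$, fed into Theorem 8.9 of \cite{Mawhin-Willem:1989}. The genuine difference is how the jump is established. The paper argues \emph{locally}: it decomposes $\h=V_{\be_k}^-\oplus V_{\be_k}^0\oplus V_{\be_k}^+$ at the single parameter $\be_k$, observes that the signs of $Q_\be$ on $V_{\be_k}^\pm$ persist for $\be$ close to $\be_k$, and shows that on the $(n-1)n_k$-dimensional kernel the derivative of the form is
\begin{equation*}
 Q_{\be_k}'(\bfv)=-f'(\be_k)\int_\Om\om^2|\bfv|^2,
\end{equation*}
via the identity $\langle C'(\be)\bfv,\bfv\rangle=f'(\be)|\bfv|^2$ obtained by differentiating $C(\be)T(\be)=T(\be)C_T(\be)$; definiteness of $Q_{\be_k}'$ on the kernel then forces the Morse index to change by exactly $(n-1)n_k$, which is \eqref{equ:morseindexDifference}. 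You instead compute the Morse index \emph{globally} along $\tw$: expanding in the eigenbasis of \eqref{equ:scalareigenpro} you obtain $m^-(\be)=(n-1)\,\#\{k':\la_{k'}<f(\be)\}$, and a transversal crossing of the level $\la_k$ changes this count by $n_k$ while keeping the linearization invertible on both sides. Your route is more explicit --- it yields the Morse index at every point of the branch and makes the non-degeneracy hypothesis of Theorem 8.9 transparent --- but it relies on one fact that you should state and justify: the eigenfunctions of \eqref{equ:scalareigenpro} form a complete, $\om^2$-orthogonal system in $H_0^1(\Om)$. This is not entirely automatic, since the weight $\om^2$ degenerates on $\pa\Om$; it follows from the standard compact self-adjoint operator argument once one verifies that the form $\int_\Om\bigl(|\nabla u|^2-u^2+M\om^2u^2\bigr)$ is coercive on $H_0^1(\Om)$ for $M$ large (a short contradiction argument using Rellich's theorem). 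The paper's local perturbation argument needs only the identification of the kernel $V_{\be_k}^0$ and the derivative of the quadratic form, so it bypasses completeness entirely; the price is that it gives the jump \eqref{equ:morseindexDifference} without giving the value of $m(\be)$ itself. Both arguments use the hypothesis $f'(\be_k)\ne0$ in exactly the same way and for the same reason.
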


\begin{proof} Let $J_\be: \h\to \R$ be the energy functional associated with \eqref{equ:EllipticsystemRelaxed}, that is
\begin{equation}
 J_\be(u_1,\ldots,u_n)=\frac{1}{2}\sumk\int_\Om(|\nabla u_k|^2-u_k^2)-\frac{1}{4}\sumk\int_\Om\mu_ku_k^4-\frac{\be}{2}\sum_{i<k}\int_\Om u_i^2u_k^2.
\end{equation}
By Sobolev embedding, $J_\be$ is well-defined and of class $C^2$. We can calculate the Morse index $m(\be)$ of $J_\be$ at $(\be,\bfu(\be))\in\tw$, in particular near the possible bifurcation points $(\be_k,\bfu(\be_k))$ found in Section~\ref{sec:poss-bif}. According to \cite[Theorem~8.9]{Mawhin-Willem:1989}, if the Morse index changes as $\be$ passes $\be_k$, then $(\be_k,\bfu(\be_k))$ is a bifurcation point. More precisely, we claim:
\begin{equation}\label{equ:morseindexDifference}
  |m(\be_k-\eps)-m(\be_k+\eps)|=(n-1)n_k,
\end{equation}
provided $f'(\be_k)\neq0$ and $\eps>0$ small. If \eqref{equ:morseindexDifference} is established, then the lemma is proved.

Denote the Hessian of $J_\be$ at $\bfu(\be)$ by
\begin{equation*}
 Q_\be(\bfv) := \langle J_\be''(\bfu(\be))\bfv,\bfv\rangle
   = \sumk\int_\Om(|\nabla v_k|^2-v_k^2)-\int_\Om \om^2\langle C(\be)\bfv,\bfv\rangle.
\end{equation*}
The Morse index $m(\be)$ is the dimension of the negative eigenspace of $Q_\be$. We decompose the space $\h=V_{\be_k}^-\oplus V_{\be_k}^0\oplus V_{\be_k}^+$, where $V_{\be_k}^-$, $V_{\be_k}^+$ and $V_{\be_k}^0$ are the negative eigenspace, positive eigenspace and kernel of $Q_{\be_k}$ respectively. In particular,
\begin{equation*}
 V_{\be_k}^0 = \left\{\bfv\in \h: v_j\in V_k\ \hbox{for}\ j=1,\dots,n,\ \sumj\ga_j(\be)v_j=0\right\},
\end{equation*}
hence $\dim V_{\be_k}^0=(n-1)n_k$. Since $C(\be)$ is a smooth function of $\be$, we have the expansion
\begin{equation*}
 Q_\be = Q_{\be_k}+(\be-\be_k)Q_{\be_k}'+o(|\be-\be_k|)\ \ \ \hbox{as}\ \be\to\be_k.
\end{equation*}
This implies that $Q_\be > 0$ on $V_{\be_k}^+$ and $Q_\be < 0$ on $V_{\be_k}^-$, provided $\be$ is close to $\be_k$. Thus the claim is true if
\begin{equation*}
 Q_{\be_k}'(\bfv) = -\int_\Om \om^2\langle C'(\be_k)\bfv,\bfv\rangle,
\end{equation*}
is positive, or negative, definite on ${V_{\be_k}^0}$.

Since $C(\be)$ is a real symmetric matrix, there exists an orthogonal matrix $T(\be)$, depending smoothly on $\be$, such that
\begin{equation*}
 T^{-1}(\be)C(\be)T(\be)=\hbox{diag}(-3, f(\be), \cdots, f(\be))=:C_T(\be),
\end{equation*}
which is equivalent to $C(\be)T(\be)=T(\be)C_T(\be)$. Differentiating both sides with respect to $\be$ and then rearranging terms leads to
\begin{equation*}
 C'(\be)=T'(\be)C_T(\be)T^{-1}(\be)+T(\be)C_T'(\be)T^{-1}(\be) - C(\be)T'(\be)T^{-1}(\be).
\end{equation*}
For any $\bfv\in V_{\be_k}^0$,
\begin{align*}
 \abr{C'(\be)\bfv,\bfv}
  &= \abr{T'(\be)C_T(\be)T^{-1}(\be)\vv,\vv} + \abr{T(\be)C_T'(\be)T^{-1}(\be)\vv,\vv}
      - \abr{C(\be)T'(\be)T^{-1}(\be)\vv,\vv}\\
  &= \abr{T'(\be)T^{-1}(\be)C(\be)\vv,\vv} + f'(\be)\abr{T(\be)T^{-1}(\be)\vv,\vv}
      - \abr{T'(\be)T^{-1}(\be)\vv,C(\be)\vv}\\
  &= f(\be)\abr{T'(\be)T^{-1}(\be)\vv,\vv}+f'(\be)|\vv|^2 - f(\be)\abr{T'(\be)T^{-1}(\be)\vv,\vv}\\
  &= f'(\be)|\vv|^2.
\end{align*}
Therefore, $Q_{\be_k}'(\bfv)$ is positive definite on $V_{\be_k}^0$ if $f'(\be_k)>0$, or negative definite on ${V_{\be_k}^0}$ if $f'(\be_k)<0$. The claim \eqref{equ:morseindexDifference} follows and the lemma is proved.
\end{proof}

\begin{altproof}{Theorem~\ref{thm:bif}}
For the relaxed system \eqref{equ:EllipticsystemRelaxed} in the focusing, defocusing or mixed cases, the lemmas \ref{lemma:fParametersFocusing}, \ref{lemma:fParametersDefocusing} and \ref{lemma:fParametersMixed}, respectively, yield an infinite sequence of possible bifurcation points in the focusing case, and finitely many bifurcation points in the other cases. According to Lemma
\ref{lem:morseindexDifference}, local bifurcation for the relaxed system  \eqref{equ:EllipticsystemRelaxed} occurs at each $\be_k$, provided $f'(\be_k)\neq0$. By Remark~\ref{remark:fprimeThreecases}, this inequality is always satisfied in the focusing case and defocusing case. It may fail in some mixed cases.

At last, we need to show that the bifurcating solutions of \eqref{equ:EllipticsystemRelaxed} are positive. Notice that
\begin{equation}\label{equ:omegaproperty}
 \om>0\ \ \ \hbox{in}\ \Om\quad \hbox{and}\quad \frac{\pa\om}{\pa\nu}<0\ \ \ \hbox{on}\ \pa\Om,
\end{equation}
where $\nu$ is the unit outward normal vector on $\pa\Om$. According to Sobolev embeddings and elliptic regularity theory, the bifurcating solutions that are close enough to a solution on $\tw$ in the $H_0^1$-norm are also close to the same solution on $\tw$ in the $C^1$-norm.  Then \eqref{equ:omegaproperty} implies that the bifurcating solutions are positive in $\Om$.
\end{altproof}

\begin{remark}
 According to the bifurcation theory, $(\be_k, u_1(\be_k),\dots,u_n(\be_k))$ is a global bifurcation point if $(n-1)n_k$ is odd. If $n=2$ and $n_k=1$, which holds for $N=1$ or a radially symmetric domain $\Om$, the Crandall-Rabinowitz theorem applies and yields locally a smooth curve of bifurcating solutions. In the other cases, the change of Morse index is greater than one, so we cannot obtain further information about the global bifurcation branches by the arguments used in \cite{Bartsch-Dancer-Wang:2010, Tian-Wang:2013-jan, Tian-Wang:2013-feb}. Some general information on the bifurcating branches can be deduced from Dancer's analytic bifurcation theory \cite{dancer:1973}.
\end{remark}

\begin{remark}
 Using the results of \cite{Tian-Wang:2013-jan, Tian-Wang:2013-feb}, system \eqref{equ:es-2} always has semi-trivial solution branches with two nonzero components, provided $n\ge3$. But semi-trivial solution branch with exactly one nonzero component do not exist in the focusing case.
\end{remark}

\section{Partially synchronized solutions and global bifurcations}
In this section, we study the global bifurcation phenomena of partially synchronized solutions of \eqref{equ:es-2}, and prove Theorem \ref{thm:globalbifur}.

In contrast with \cite{Bartsch-Dancer-Wang:2010, Tian-Wang:2013-jan, Tian-Wang:2013-feb}, we cannot claim the existence of global bifurcation at $\be_k$ even when $n_k$ is odd, since \eqref{equ:morseindexDifference} shows that the Crandall-Rabinowitz condition for global bifurcation now also depends $n$. In particular, if $n$ is odd, then $(n-1)n_k$ is even and no global bifurcation at $\be_k$ can be claimed. Using the hidden symmetry observed in \cite{Bartsch:2013}, we may find global bifurcations in some subspaces of $\R\times\h$.

\begin{altproof}{Theorem~\ref{thm:globalbifur}}
It is straightforward to check that the following results cited from \cite{Bartsch:2013} can be applied to the system \eqref{equ:es-2} with no substantial changes.
\begin{enumerate}[(i)]
  \item On one hand, the $|\cP|$-synchronized solutions of system \eqref{equ:es-2} satisfy a reduced system of \eqref{equ:es-2} with $|\cP|$ components, see \cite[Lemma 5.1]{Bartsch:2013}. On the other hand, using the solution of this reduced $|\cP|$-component system, we can construct a $|\cP|$-synchronized solution of system \eqref{equ:es-2}, see \cite[Lemma 5.2 and Proposition 5.3]{Bartsch:2013}. Next, bifurcations of $|\cP|$-synchronized solutions can be verified at bifurcation points of general solutions, see \cite[Lemma 5.4]{Bartsch:2013}. Thus (i) is proved.
  \item If $(|\cP|-1)n_k$ is odd, then by Crandall-Rabinowitz's bifurcation theorem, global bifurcations of $\cP$-synchronized solutions occur at each bifurcation point $(\be_k, \bfu(\be_k))$.
  \item If $n_k$ is odd and $A$ is a nonempty proper subset of $\{1,\dots,n\}$, then the existence of a global bifurcation branch $\ms_k^A$ with $\cP_A$-synchronized solutions can be easily seen for $\cP_A=\{A, A^c\}$. Let $B$ be another nonempty proper subset of $\{1,\dots,n\}$ satisfying $B\ne A$ and $B\ne A^c$. If $\ms_k^A\cap\ms_k^B\ne\emptyset$, then there exists $(\be,\bfu)\in\ms_k^A\cap\ms_k^B$. By the definition of partially synchronized solution and simple set operations, we obtain that all components of $\bfu$ are synchronized. This contradicts with the fact that $A$ and $B$ are both nonempty proper subsets of $\{1,\dots,n\}$.
  \item Let $\cP=\{A, A^c\}$. In the case $N=1$ or $\Om$ is radially symmetric, $n_k=1$ for every eigenvalue $\la_k$ of \eqref{equ:scalareigenpro}. Thus there is a global bifurcation branch of $\cP$-synchronized solutions at each bifurcation point $(\be_k,\bfu(\be_k))$. The conclusion follows from the the bifurcation results for indefinite two-component systems, see \cite{Tian-Wang:2013-jan, Tian-Wang:2013-feb}.
\end{enumerate}
\end{altproof}

\section{Nonexistence results}
In this section we prove Theorem~\ref{thm:nonexistence}. We argue by contradiction and assume that $\bfu$ is a solution of \eqref{equ:es-1}.

\begin{altproof}{Theorem~\ref{thm:nonexistence}(i)}
We multiply the $j$-th equation in \eqref{equ:es-2} with the principal eigenfunction $\phi_1$ of $-\De$ in $H^1_0(\Om)$ and obtain:
\[
0 \ge (\La_1+a_j)\int_\Om u_j\phi_1 = \int_\Om (-\De u_j+a_ju_j)\phi_1\\
  = \int_\Om \left(\mu_ju_j^3+\be\sum_{k\ne j}u_k^2\right)\phi_1 > 0
\]
\end{altproof}

\begin{altproof}{Theorem~\ref{thm:nonexistence}(ii)}
Here we multiply the $i$-th equation by $u_j$, the $j$-th equation by $u_i$, and obtain:
\[
\ba
 0 \geq (a_j-a_i)\int u_iu_jdx = (\mu_j-\be)\int u_iu_j^3+(\be-\mu_i)\int u_i^3u_j \ge 0.
\ea
\]
This gives a contradiction if one of the inequalities $a_j\le a_i$, $\mu_i\le\be\le\mu_j$ is strict.
\end{altproof}

\begin{altproof}{Theorem~\ref{thm:nonexistence}(iii)}
By Theorem~\ref{thm:nonexistence}(i) we only need to consider the case $\bar\be < \be \le 0$. The following argument works for $\bar\be < \be < \mu_1$. Multiplying both sides of the $k$-th equation by $\al_k\phi_1$, where $\al_k=(\sqrt{\mu_k-\be})^{-1}$, we obtain:
{\allowdisplaybreaks
\begin{align*}
0 &\geq \sumk\int(\La_1+a_k)\al_ku_k\phi_1 = \int\left(\sumk\mu_k\al_ku_k^3
    + \be\sumk\al_ku_k\sum_{l\ne k}^nu_l^2\right)\phi_1\\
  &= \int\left(\sumk(\mu_k-\be)\al_ku_k^3 + \be\sumk\al_ku_k\sumk u_k^2\right)\phi_1\\
  &= \left(\sumk\frac{1}{\mu_k-\be}\right)^{-1}
      \int\left(\sumk\frac{1}{\mu_k-\be}\sumk(\mu_k-\be)\al_ku_k^3 +
                \sumk\frac{\be}{\mu_k-\be}\sumk\al_ku_k\sumk u_k^2\right)\phi_1
\end{align*}}
We only need to make sure that the summation inside the parentheses is positive. Using the definition of $g(\beta)$ and of $\al_k$ we obtain:
{\allowdisplaybreaks
\begin{align*}
&\sumk\frac{1}{\mu_k-\be}\sumk(\mu_k-\be)\al_ku_k^3
   + \sumk\frac{\be}{\mu_k-\be}\sumk\al_ku_k\sumk u_k^2\\
&\hspace{.5cm}
 = g(\be)\sumk\al_ku_k^3
    + \sum_{i<j}\left(\frac{\mu_j-\be}{\mu_i-\be}\al_ju_j^3 + \frac{\mu_i-\be}{\mu_j-\be}\al_iu_i^3
      + \sumk\frac{\be}{\mu_k-\be}(\al_iu_iu_j^2+\al_ju_i^2u_j)\right)\\
&\hspace{.5cm}
 = g(\be)\left(\sumk\al_ku_k^3 + \sum_{i<j}(\al_iu_iu_j^2+\al_ju_i^2u_j)\right)\\
&\hspace{1cm}
   +\sum_{i<j}\left(\frac{\mu_j-\be}{\mu_i-\be}\al_ju_j^3 +
                    \frac{\mu_i-\be}{\mu_j-\be}\al_iu_i^3-(\al_iu_iu_j^2+\al_ju_i^2u_j)\right)\\
&\hspace{.5cm}
 = g(\be)\left(\sumk\al_ku_k^3 + \sum_{i<j}(\al_iu_iu_j^2+\al_ju_i^2u_j)\right)\\
&\hspace{1cm}
   + \sum_{i<j}\left[u_j^2\left(\frac{\mu_j-\be}{\mu_i-\be}\al_ju_j-\al_iu_i\right)
   + u_i^2\left(\frac{\mu_i-\be}{\mu_j-\be}\al_iu_i-\al_ju_j\right)\right]\\
&\hspace{.5cm}
 = g(\be)\left(\sumk\al_ku_k^3 + \sum_{i<j}(\al_iu_iu_j^2+\al_ju_i^2u_j)\right)\\
&\hspace{1cm}
   + \sum_{i<j}\left[\frac{\mu_j-\be}{\mu_i-\be}u_j^2
                      \left(\al_ju_j-\frac{\mu_i-\be}{\mu_j-\be}\al_iu_i\right)
                     + u_i^2\left(\frac{\mu_i-\be}{\mu_j-\be}\al_iu_i-\al_ju_j\right)\right]\\
&\hspace{.5cm}
 = g(\be)\left(\sumk\al_ku_k^3 + \sum_{i<j}(\al_iu_iu_j^2+\al_ju_i^2u_j)\right)
    + \sum_{i<j}\left(\frac{\mu_j-\be}{\mu_i-\be}u_j^2-u_i^2\right)
        \left(\al_ju_j-\frac{\mu_i-\be}{\mu_j-\be}\al_iu_i\right)\\
&\hspace{.5cm}
 = g(\be)\left(\sumk\al_ku_k^3 + \sum_{i<j}(\al_iu_iu_j^2+\al_ju_i^2u_j)\right)\\
&\hspace{1cm}
    + \sum_{i<j}\left(\sqrt{\frac{\mu_j-\be}{\mu_i-\be}}u_j+u_i\right)
        \left(\sqrt{\frac{\mu_j-\be}{\mu_i-\be}}u_j-u_i\right)
        \left(\frac{(\mu_j-\be)\al_j}{(\mu_i-\be)\al_i}u_j-u_i\right)\frac{\mu_i-\be}{\mu_j-\be}\al_i\\
&\hspace{.5cm}
 = g(\be)\left(\sumk\al_ku_k^3 + \sum_{i<j}(\al_iu_iu_j^2+\al_ju_i^2u_j)\right)\\
&\hspace{1cm}
    + \sum_{i<j}\left(\sqrt{\mu_j-\be}u_j + \sqrt{\mu_i-\be}u_i\right)
                \left(\frac{u_j}{\sqrt{\mu_i-\be}}-\frac{u_i}{\sqrt{\mu_j-\be}}\right)^2\\
&\hspace{.5cm}
 \geq g(\be)\left(\sumk\al_ku_k^3 + \sum_{i<j}(\al_iu_iu_j^2+\al_ju_i^2u_j)\right).
\end{align*}}
Substituting the above inequality in the previous integral inequality yields a contradiction if $a_j<\La_1$ for some $j$ or $g(\be)>0$.
\end{altproof}

\begin{altproof}{Theorem~\ref{thm:nonexistence}(iv)}
The case $\mu_1\le\be<\mu_n$ has been treated in (ii), the case $\be\ge\mu_n>0$ in (i).
\end{altproof}

\end{document}